\newtheorem{theorem}{Theorem}[section]
\newtheorem{corollary}{Corollary}
\newtheorem{lemma}[theorem]{Lemma}
\newtheorem{proposition}{Proposition}
\theoremstyle{definition}
\newtheorem{definition}[theorem]{Definition}
\newtheorem{remark}{Remark}
\newcommand{\ep}{\varepsilon}
\newcommand{\eps}[1]{{#1}_{\varepsilon}}
\newcommand{\R}{{\Bbb R}}
\newcommand{\C}{{\Bbb C}}
\title[Pushed wavefronts for a monostable non-monotone delayed model]{On pushed wavefronts of monostable equation with unimodal delayed reaction}
\author[Has\'ik Kopfov\'a  N\'ab\v elkov\'a and Trofimchuk]{}
\subjclass{Primary:  34K10, 35K57; Secondary: 92D25.}
 \keywords{Traveling front, pushed wave, minimal speed.}
\email{Karel.Hasik@math.slu.cz}
\email{Jana.Kopfova@math.slu.cz}
\email{petra.nabelkova@math.slu.cz}
 \email{trofimch@inst-mat.utalca.cl}
\begin{document}
\maketitle

\centerline{\scshape Karel Has\'ik, Jana Kopfov\'a, Petra N\'ab\v elkov\'a }
\medskip
{\footnotesize
 \centerline{Mathematical Institute, Silesian University, 746 01 Opava, Czech Republic}
} 

\medskip

\centerline{\scshape Sergei Trofimchuk$^*$
}
\medskip
{\footnotesize
 \centerline{Instituto de Matem\'atica, Universidad de Talca, Casilla 747,
Talca, Chile}
}

\bigskip


\begin{abstract}
 We study the Mackey-Glass type monostable delayed reaction-diffusion equation with a unimodal birth function $g(u)$. 
This model, designed to describe evolution of single species populations,  is considered here in the presence of the weak Allee effect 
($g(u_0)>g'(0)u_0$ for some $u_0>0$).  We focus our  attention on the existence of  slow monotonic traveling fronts  to the equation: under given assumptions, 
this problem seems to be rather  difficult since the  usual positivity and monotonicity arguments are not effective.  First, we solve the front existence problem
for small delays, $h \in [0,h_p]$, where $h_p$  (given by an explicit formula) is optimal in a certain sense.  Then we take a representative piece-wise linear unimodal
birth function making possible explicit computation of traveling fronts. In this case, we find out that a) increase of delay can 
destroy asymptotically stable pushed fronts; b) the set of  all admissible wavefront speeds  has usual structure of a semi-infinite interval $[c_*, +\infty)$;  c) for each $h\geq 0$, the pushed wavefront is unique (if it exists); d) pushed wave can oscillate slowly around the positive equilibrium for sufficiently large delays. 

\end{abstract}

\vspace{5mm}

\section{Introduction}
\noindent
In this work, we consider the Mackey-Glass type delayed reaction-diffusion equation
 \begin{equation}\label{MGD}
u_t(t,x) = u_{xx}(t,x)  - u(t,x) + g(u(t-h,x)), \quad u \geq 0,\ (t,x) \in \R^2, \ h \geq 0, 
\end{equation}
widely  used to model evolution of single species populations  (see \cite{BY} for details and further references). 
In such an ecological interpretation of  (\ref{MGD}),   $g: \R_{+} \to \R_{+} $ is called a birth function, and if we further assume that $g'(0)\geq 1$ and that the equation $g(x)= x$ has 
just two  solutions, $0$ and $\kappa >0$,   this equation belongs to the class of the so-called  monostable  population models.  Clearly, monostable equation (\ref{MGD}) has exactly 
two equilibria, $u=0$ and $u=\kappa$.  It  can  also have positive wave solutions $u(t,x)=\phi(x+ct), \ \phi(-\infty)=0, \ \liminf_{t\to +\infty}\phi(t) >0$, corresponding to  the transition regimes 
between these two equilibria. These solutions called semi-wavefronts (or wavefronts if, in addition,  $\phi(+\infty)=\kappa$)  characterize spatial propagation of the species  and thus  are quite significant from the ecological point of view.  The description of  the set $\mathcal C(h)$ of all possible velocities of semi-wavefronts (for each fixed $h>0$) is  of evident practical importance, this problem  is also one of fundamental  interest in the traveling wave  theory. It follows from (\ref{MGD}) that $\mathcal C(h)$ depends solely on the properties of $g$ (for each fixed $h\geq 0$).  

In 
particular, it is known that $\mathcal C(h)$  is a  closed unbounded interval, $\mathcal C(h) =[c_*(h),+\infty)$ whenever $C^1$-smooth $g$ is either monotone on $[0,\kappa]$ or it satisfies 
the following two (sub-tangency and unimodality) conditions:
\begin{equation}\label{subt}
g(x) \leq g'(0)x, \qquad x \geq 0, 
\end{equation}

\vspace{2mm}

\noindent  {\rm \bf(U)}  there exists $0< \theta <\kappa$  such that $g$ increases  on the interval $[0,\theta)$ and is decreasing   otherwise; we are assuming also that $g'(0) >1.$ 

\vspace{2mm}
\noindent Then real number $c_*(h)$ belonging to the boundary of $\mathcal C(h)=[c_*(h),+\infty)$, is called the minimal (or critical) speed of propagation, it has a special status in the theory and applications. In particular,  occupation of a new environment by invasive species  is realized precisely with the minimal speed.

It should be noted that the assumption {\rm \bf(U)}  of unimodal shape of $g$ is quite natural from the ecological point of view  \cite{BY} and  can barely be considered as an essential  
limitation. The situation when  $g$ is increasing on $[0,\kappa]$ is considerably  simpler for mathematical analysis due to the availability of comparison techniques. Owing to the previous studies (cf. \cite{LZii,tpt,WNH}) properties of waves in  the monotone model  (\ref{MGD}) are well understood so  we will assume in the sequel  that $g$ is non-monotone and has  unimodal  shape.  In such a case,  the topological structure of $\mathcal C(h)$  can  be  potentially more complex  when, in addition, $g$ does not satisfy inequality (\ref{subt}) (recall that  (\ref{subt}) is an important ecological restriction excluding the weak Allee effect\footnote{On the other hand,  (\ref{subt}) is essential for the analysis of  (\ref{MGD}), allowing to invoke positivity arguments based on non-negativity of the function  $g'(0)x-g(x)$.}).
In \cite[question (iii), p. 107]{ai},  S. Ai   posed a question  about the existence of the minimal speed in its usual meaning of a unique boundary point of $\mathcal C(h)$,  for  monotone fronts to  monostable non-monotone delayed  equations. Theorem 1.3 in  \cite{ai}  answers partially this question for some special  models with   {\it distributed} delays which admit transformation of associated  delayed profile equations into the systems of ordinary differential equations.  Fenichel's geometric singular perturbation theory was instrumental  in proving this result in  \cite{ai}
(which, however,  does not apply to  equations with {\it discrete} delays).

Consequently,  the delayed equation  (\ref{MGD}) with the unimodal birth function $g$ which does not satisfy the sub-tangency 
assumption (\ref{subt}) is both  an interesting and a challenging object to study. Even such starting point for the research as the question about connectedness of  the set $\mathcal C(h)$ for $h>0$ should  still be answered.    In view of our previous discussion,  we will say that the propagation speed $c$ (and the associated traveling wave) is critical if it belongs to the boundary of the set $\mathcal C(h)$.  It is natural to expect that the critical wavefronts correspond to  the key transition regimes in the model, and if the set $\mathcal C(h)$ is not connected,  equation  (\ref{MGD}) can have multiple special  modes of propagation.  The problem concerning the uniqueness of the critical semi-wavefronts for equation   (\ref{MGD}) seems to be very difficult in  both monostable and bistable cases, cf. \cite{ADG,ai}, 
 however, one can also expect that it can be solved at least in the case of small delays (in the spirit of the proverb "small delays are harmless").  
In this paper, we are presenting  the first result in this direction. In fact, it is the optimal one whenever we are concerned with the monotone wavefronts: 

\begin{theorem} \label{main1}
Suppose that $g$ satisfies   {\rm \bf(U)}, $\min_{u\in[0,\kappa]}g'(u)= g'(\kappa) < 0,$  and
\begin{equation}\label{gcos}
\left|g'(u)- g'(0)\right| \leq Au^\gamma, \quad u\in  [0,\delta],
\end{equation}
for some $A>0, \delta \in (0,\theta), \gamma \in (0,1]$. 
Define $h_*>0$ as the unique real solution of the equation 
$
1=|g'(\kappa)|he^{h+1}.
$
Then  for each fixed $h \in [0,h_*]$ there is a positive 
number $c_*=c_*(h)$ (called the minimal speed of propagation) such that  equation (\ref{MGD}): (a)  possesses  a unique monotone wavefront $u(t,x)= \phi_c( x+ct)$ for every $c \geq c_*$; (b) does not have  any semi-wavefront propagating at the velocity $c < c_*$.  

The above result concerning the existence of $c_*(h)$ satisfying both requirements (a) and (b) fails to hold if $h > h_*$. 
\end{theorem}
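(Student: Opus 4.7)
The plan is to work in traveling-wave coordinates $z=x+ct$, reducing the problem to the delay profile equation
\[
\phi''(z) - c\phi'(z) - \phi(z) + g(\phi(z-ch)) = 0, \qquad \phi(-\infty)=0,\ \phi(+\infty)=\kappa,
\]
and to begin with a careful linear analysis at each equilibrium. At $0$, the characteristic equation $\chi_0(\lambda;c,h) := \lambda^2 - c\lambda - 1 + g'(0)e^{-ch\lambda} = 0$ admits, by the standard KPP analysis, a pair of positive real roots $0<\lambda_1(c)\leq \lambda_2(c)$ iff $c \geq c_\#(h)$. At $\kappa$, elementary optimization of $\chi_\kappa(\mu;c,h) := \mu^2 - c\mu - 1 + g'(\kappa)e^{-ch\mu} = 0$ over $\mu<0$ (via the substitution $s=-ch\mu$) identifies $|g'(\kappa)|he^{h+1}=1$ as exactly the threshold at which, in the limit $c\to+\infty$, the rightmost root $\mu_\kappa(c)$ of $\chi_\kappa$ loses its reality and negativity. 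Consequently, for $h \in [0,h_*]$ this rightmost root is real and negative for every admissible speed, whereas for $h>h_*$ it bifurcates into a complex conjugate pair once $c$ exceeds a finite bound.

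For the existence part (a), I would construct monotone wavefronts through an upper/lower solution scheme in the spirit of Diekmann--Kaper. The comparison functions are built from the roots $\lambda_1(c), \lambda_2(c)$ of $\chi_0$, and their validity relies on the Hölder-type bound $|g(u)-g'(0)u| \leq C u^{1+\gamma}$ supplied by (\ref{gcos}) as the natural substitute for the missing subtangency (\ref{subt}). A Schauder fixed-point argument on the associated order interval produces a candidate profile with correct exponential decay at $-\infty$; the reality and negativity of $\mu_\kappa(c)$ (secured by $h\leq h_*$) then forces non-oscillatory convergence to $\kappa$ at $+\infty$, and a sliding comparison argument upgrades this to global monotonicity $\phi'>0$. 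Uniqueness of the monotone wavefront at fixed $c$ follows from a further sliding argument exploiting that both tails are pinned to unique exponential rates. Defining $c_*(h)$ as the infimum of speeds admitting a semi-wavefront and passing to the limit as $c \downarrow c_*$ closes the case $c=c_*$.

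For (b), the Laplace transform of a positive semi-wavefront $\phi$ has an abscissa of convergence that must be a positive real root of $\chi_0(\cdot;c,h)$ (with (\ref{gcos}) again controlling the nonlinear residual $g(\phi)-g'(0)\phi$ under the transform), forcing $c \geq c_\#(h)$; a continuation argument then links this linear lower bound to $c_*(h)$ itself. The sharpness statement follows directly from the linear analysis at $\kappa$: when $h>h_*$, the rightmost roots of $\chi_\kappa(\cdot;c,h)$ form a complex conjugate pair for all sufficiently large $c$, so any bounded orbit approaching $\kappa$ must oscillate around it, precluding monotone wavefronts at those speeds and destroying the clean $[c_*,\infty)$ structure of the admissible set. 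I anticipate that the main obstacle will be the verification of monotonicity of the Schauder-produced profile in the absence of both (\ref{subt}) and the monotonicity of $g$; the uniform non-oscillatory approach to $\kappa$ secured by $h\leq h_*$, combined with the Hölder control (\ref{gcos}) near $0$, is precisely what makes a sliding-type monotonicity argument feasible.
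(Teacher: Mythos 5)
Your plan for part (a) has a genuine gap at its core. You propose to build upper/lower solutions from the exponentials $e^{\lambda_{1,2}(c)t}$ attached to the characteristic function $\chi(\cdot,c)$ at $0$, with the H\"older condition (\ref{gcos}) serving ``as the natural substitute for the missing subtangency''. It cannot play that role: (\ref{gcos}) is a purely local regularity condition on $g'$ near $u=0$ and gives no upper bound of the form $g(u)\le g'(0)u$ away from the origin, which is exactly what makes $\min\{\kappa, e^{\lambda t}\}$-type (or KPP/Diekmann--Kaper) supersolutions work. Worse, if such a linearly-built scheme did close, it would produce wavefronts for every $c\ge c_\#(h)$, i.e.\ it would force $c_*(h)=c_\#(h)$ --- but under the weak Allee effect this is false in general: the pushed regime $c_*(h)>c_\#(h)$ is precisely the phenomenon the paper exhibits (Theorem \ref{main2} and formula (\ref{111})). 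So your construction cannot prove existence on all of $[c_*,+\infty)$, and you never actually address the real issue, namely the connectedness of the admissible speed set $\mathcal{C}(h)$ when the minimal speed is not linearly determined. Relatedly, your argument for (b) only excludes speeds $c<c_\#(h)$ via the Laplace transform; the interval $[c_\#(h),c_*(h))$ is left to an unspecified ``continuation argument'', which is where all the difficulty sits.

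The paper's route is different and is worth internalizing. Existence for $c\ge c^*(h)$ (with $c^*$ defined through $\sup_{x>0}g(x)/x$) is taken from \cite{tt}, and monotonicity plus uniqueness of every front for $h\le h_*$ is quoted from \cite[Theorem 8]{TpT}, not obtained by sliding. The new content is a continuation-in-speed argument: given a wavefront $\phi$ at speed $c_0$, an upper solution for a slightly larger speed $c'$ is built as $\sigma\phi$ plus small exponential corrections; the sign of the perturbation term is controlled by Lemma \ref{LAP}, i.e.\ the inequality $\phi'(t)+hg'(\kappa)\phi'(t-\bar ch)>0$, proved through the convolution factorization $D=D_1D_2$ and the negativity of the fundamental solution $\psi$ (Section \ref{fund}); this is exactly where the restriction $h\le h_*$, via condition (\ref{hka}), enters --- not merely through the reality of the roots of $\chi_\kappa$ at $+\infty$ as in your sketch. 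The lower solution is a wavefront of the modified nonlinearity $g_-=\min\{g,p\}$, which \emph{is} subtangential, and the two are connected by monotone iteration of the operator $\mathcal{N}$ with negative kernel $N=\psi*\theta$, whose monotonicity uses the hypothesis $\min_{[0,\kappa]}g'=g'(\kappa)<0$ --- a hypothesis your proposal never invokes, a sign that the mechanism is missing. Connectedness of $\mathcal{C}(h)$, hence (a) and (b) (every semi-wavefront being a monotone wavefront for $h\le h_*$ by \cite{TpT}), then follows from closedness of $\mathcal{C}(h)$ and the inclusion $[c^*(h),\infty)\subset\mathcal{C}(h)$. Your sharpness discussion for $h>h_*$ is in the right spirit, but the oscillation of waves around $\kappa$ when the dominant zeros of $\chi_\kappa$ are complex is itself a nontrivial fact, cited in the paper from \cite[Theorem 1.7]{LMS}.
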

Theorem \ref{main1} shows that the inclusion of `small' delays into model  (\ref{MGD}) does not change the  usual structure of an unbounded interval $[c_*,+\infty)$ of the set $\mathcal C$ of all admissible speeds for semi-wavefronts. Importantly, the above result presents  simple formula for the exact upper bound $h_*$ for the size of  the `small' delay (observe that  $h_*\to +\infty$ if $g'(\kappa)\to 0^-$). The existing literature on the subject presents various perturbation techniques to  treat the case of  small delays.  Specifically, here we would like to mention the Wu and Zou method  from \cite{wz} and the Ou and Wu approach in \cite{OW}. The aforementioned works show that the existence of the wavefront for non-delayed equation (\ref{MGD})  propagating at  speed $c > c_*(0)$ implies, under rather weak shape conditions on $g$, the existence of some positive $h_0(c)$ such that this wavefront persists for all $h \in [0, h_0(c)]$.   Nevertheless, these results do not allow to establish  the connectedness of  the set $\mathcal C(h)$ even for small $h$. To have a better idea of what  $\mathcal C(h)$  may look like,  we study in this paper an explicitly solvable `toy' model with  piece-wise linear (but discontinuous) unimodal birth function $g$ shown on Figure \ref{GeomVyznam}. Note that condition (\ref{subt}) in this case reads as $k \geq 3$. 
\begin{figure}[h] 
\vspace{-20mm} 
\begin{center}
\scalebox{0.32}{\includegraphics{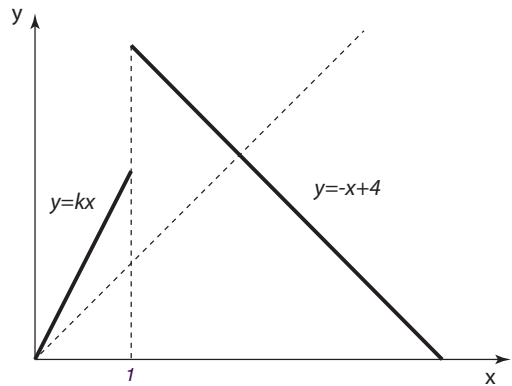}}
\vspace{-23mm} 
\caption{Toy model:  piece-wise linear birth function $g$. }
\label{GeomVyznam}
\end{center}
\end{figure}

As several previous works show (e.g. see \cite{KH,NPTT,TVN}), such a kind  of nonlinear birth functions $g$ allows to detect all essential geometric features of traveling waves that appear  in the unimodal models.   In Section \ref{Stoy}, we show that  for each $k\in (1,3)$ all traveling fronts to  equation (\ref{MGD}) considered with $g$ given on Figure \ref{GeomVyznam} can be determined in an  explicit way.  This leads to the following conclusion confirming all results of Theorem \ref{main1} (as well as  of Theorem \ref{pr1} below)  and suggesting that the simple topological  structure $\mathcal C(h)=[c_*(h),+\infty)$ of the set of all admissible semi-wavefront speeds  could also hold  for unimodal equation (\ref{MGD}). 

\noindent Let  $c=c_\#(h)$ be the unique positive number for which   the characteristic equation 
\begin{equation}\label {che}
\chi(z,c):= z^2 -cz - 1 + g'(0)e^{-zch}=0.
\end{equation}
has a double positive root (so that $c  > c_\#$ implies that (\ref{che}) 
has exactly two real solutions $0 < \lambda_2 < \lambda_1, \ \lambda_j = \lambda_j(c)$).
\begin{theorem} \label{main2} 
Suppose that $k\in (1,3)$  and  take $g$ as on Figure \ref{GeomVyznam}.  Then there exists a continuous decreasing function $c_*:[0,+\infty)\to (0,+\infty)$ such that  $c_*(h)$ is the minimal speed of propagation in the sense that equation (\ref{MGD}) has a wavefront solution 
propagating with the speed $c$  if and only if $c \geq c_*(h)$.  Furthermore, if $k \in (1,5/3)$, then there is some maximal $h_p=h_p(k) \in (0,+\infty]$ such that $c_*(h)>c_\#(h)$ for all 
$h \in [0,h_p)$. Finally, each wavefront is unique (up to translation) and for each fixed $h \geq 0$ equation (\ref{MGD}) has at most one non-linearly determined wavefront (i.e. wavefront with asymptotic representation (\ref{afecp}) given in Theorem \ref{pr1} below). 
 \end{theorem}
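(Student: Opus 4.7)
The plan is to exploit the piecewise-linear form of $g$ to write every traveling front in closed form. Normalize $\phi_c(0)=\theta$ so that, by monotonicity, the delayed argument $\phi_c(t-ch)$ crosses $\theta$ exactly at $t=ch$. The profile equation thus splits into two constant-coefficient linear DDEs:
\[
\phi_c''(t)-c\phi_c'(t)-\phi_c(t)+g'(0)\phi_c(t-ch)=0,\qquad t\le ch,
\]
and on $(ch,+\infty)$ the analogous equation with $g'(0)$ replaced by $g'(\kappa)<0$ plus an inhomogeneous constant making $\kappa$ a particular solution. Continuity of $\phi_c$ and $\phi_c'$ glues the two pieces at $t=ch$ (the jump of $g$ at $\theta$ only creates a jump in $\phi_c''$).

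On the left, the characteristic polynomial is $\chi(z,c)$ from (\ref{che}); for $c\ge c_\#(h)$ it has the two positive real roots $\lambda_2\le\lambda_1$, while the remaining (complex) roots have nonpositive real part and are ruled out for any positive monotone profile with $\phi_c(-\infty)=0$. Consequently
\[
\phi_c(t)=A_1 e^{\lambda_1 t}+A_2 e^{\lambda_2 t},\qquad t\le ch,
\]
with $A_1,A_2\ge 0$. On the right, writing $\phi_c=\kappa+\psi$, the equation for $\psi$ has characteristic polynomial $\widetilde\chi(z,c)=z^2-cz-1+g'(\kappa)e^{-zch}$; since $\widetilde\chi(0)=-1+g'(\kappa)<0$ and $\widetilde\chi(z)\to+\infty$ as $z\to+\infty$, there is always a positive real root, producing a growing mode that must be absent from $\psi$ if $\phi_c(+\infty)=\kappa$. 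Killing this mode amounts to one scalar condition on $(A_1,A_2)$; together with the normalization $\phi_c(0)=\theta$ this parametrizes a one-parameter family of candidate profiles indexed by $c$.

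The minimal speed $c_*(h)$ is then characterized by positivity: it is the smallest $c$ for which the matching produces $A_1,A_2\ge 0$. Above $c_*(h)$ one has $A_2>0$ and the profile is a pulled (linearly determined) wavefront, decaying like $e^{\lambda_2 t}$ at $-\infty$. At $c=c_*(h)$ either $A_2=0$ (pushed wavefront with leading decay $e^{\lambda_1 t}$, matching (\ref{afecp})) or $\lambda_1(c)=\lambda_2(c)$, i.e.\ $c_*(h)=c_\#(h)$. Continuity and strict monotonicity of $c_*(h)$ in $h$ follow via the implicit function theorem applied to the positivity equation, after checking constant sign of its partials. Uniqueness up to translation is automatic since, for fixed admissible $c$, matching plus normalization determines $(A_1,A_2)$ uniquely; uniqueness of the pushed wavefront follows because the extra condition $A_2=0$ pins down a single $c$ for each $h$. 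The restriction $k\in(1,5/3)$ is exactly the range in which this extra condition is solvable by some $c>c_\#(h)$ on a nontrivial interval $[0,h_p)$.

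The main obstacle is not the algebra but the qualitative verification: (i) every candidate constructed as above must actually be monotone on $(ch,+\infty)$, despite the potentially oscillatory modes of $\widetilde\chi$ caused by $g'(\kappa)<0$; and (ii) nonexistence of semi-wavefronts for $c<c_*(h)$ must be established against arbitrary, possibly non-monotone or oscillating, candidates not captured by the two-exponential ansatz. Step (i) requires tracking the leading roots of $\widetilde\chi(z,c_*(h))$ and controlling their real parts; the threshold $h_p(k)$ is identified as the largest $h$ for which the leading stable root of $\widetilde\chi$ remains real. Step (ii) proceeds by a Laplace-transform positivity argument on the half-line $\{\phi_c<\theta\}$, where any semi-wavefront must locally solve the linear DDE with characteristic polynomial $\chi(z,c)$: absence of any positive real root of $\chi(\cdot,c)$ for $c<c_\#(h)$ forces the trivial solution, and a separate pushed-versus-pulled analysis handles the range $c_\#(h)\le c<c_*(h)$.
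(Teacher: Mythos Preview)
Your overall architecture—split the profile at the unique crossing of $\theta=1$, solve two linear constant-coefficient problems, kill the unstable mode $\mu_1$ of $\widetilde\chi=\chi_\kappa$ on the right by a Laplace-transform condition, and read off the minimal speed from positivity of the coefficients on the left—is exactly what the paper does. The paper makes this completely explicit: with the ansatz $\phi(t)=p\,e^{\lambda_2(t+ch)}+(1-p)\,e^{\lambda_1(t+ch)}$ on the left, the Laplace condition at $\mu_1$ yields the closed formula
\[
p=\frac{4\lambda_1/\mu_1-(3-k)}{1+k}\cdot\frac{\mu_1-\lambda_2}{\lambda_1-\lambda_2},
\]
so that $p\ge 0$ iff $\lambda_1(c)/\mu_1(c)\ge (3-k)/4$; the pushed wave corresponds to equality, and the paper proves that $T(c)=\lambda_1(c)/\mu_1(c)$ is strictly increasing in both $c$ and $h$, which gives uniqueness of the pushed speed and the monotone decrease of $c_*(h)$ directly (no implicit-function argument needed).

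There are, however, two genuine problems in your sketch. First, you build the whole argument on monotonicity of $\phi_c$, using it both to guarantee a single crossing of $\theta$ and as the qualitative property to be verified in your step~(i). This fails: the theorem is stated for \emph{all} $h\ge 0$, and the paper exhibits (for $k=1.2$, large $h$) pushed minimal fronts that oscillate around $\kappa$ on the right. What is actually needed—and what the paper proves—is only that $\phi(t)>1$ for all $t>-ch$, so that the delayed argument never re-enters the first linear regime. The paper obtains this not from monotonicity but by a homotopy argument: connect $(h,c)$ to a point where the front is known to be monotone by a continuous path in parameter space, use the a~priori bound $\phi<3$ (Lemma in Section~\ref{Stoy}), and show that a first tangency $\phi(t_1)=1$, $\phi'(t_1)=0$ would contradict the right-hand equation. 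Your step~(i), aimed at monotonicity via the stable spectrum of $\chi_\kappa$, cannot succeed in the oscillatory regime.

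Second, you misidentify $h_p$. You write that $h_p(k)$ is ``the largest $h$ for which the leading stable root of $\widetilde\chi$ remains real''. That threshold governs the monotone/oscillatory transition of the profile near $+\infty$ (it is the curve $c=c_\kappa(h)$ in the paper, with associated value $h_{osc}$). The pushed/pulled threshold $h_p$ is something else entirely: it is determined by whether the scalar equation $\lambda_1(c)/\mu_1(c)=(3-k)/4$ admits a solution $c>c_\#(h)$, i.e.\ by the inequality $T(c_\#(h))\le (3-k)/4$. The two thresholds are unrelated; for $k=1.2$ the paper shows $h_p=+\infty$ while $h_{osc}\approx 3.25$ is finite.
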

 Some numerical and geometrical evidences suggest that for $k$ close to 5/3,  $h_p(k)$ is finite, but if  $k$ goes closer to 1  then $h_p(k)=+\infty$ (compare the cases $k=1.5$ and $k =1.2$ in Section \ref{Stoy}). In other words, if `non-subtangency' of the birth function $g(u)$ at $0$ is relatively strong then, independently on the size of the delay, all minimal wavefronts are non-linearly determined. However, when `non-subtangency' of $g(u)$ at $0$ is relatively weak (in our toy model this surely happens  if $k \geq 1.5$), then all minimal wavefronts become linearly determined once the delay surpasses the critical value $h_p$.  This change is important for the dynamics of (\ref{MGD}) because the non-linearly determined wavefronts have  better stability properties. See also \cite{EP} for other arguments. 
 
Theorems \ref{main1} and \ref{main2} raise the question of whether the minimal speed $c_*(h)$ can be calculated explicitly from (\ref{MGD}) for each fixed $g$ and given $h\geq 0$.  It is well known that   if we assume   (\ref{subt}) then  $c_*(h)=c_\#(h)$. Without (\ref{subt}), the computation of $c_*(h)$ can be regarded  as a very difficult task even for non-delayed models \cite{BD,GK,Xin}.  It is known that $c_*(h) \in [c_\#(h),c^*(h)]$, where $c=c^*(h)$ is the unique positive number for which  
the  equation
$$
z^2 -cz - 1 + g'_+e^{-zch}=0, \quad g'_+:= \sup_{x \geq 0} g(x)/x,
$$
has a double positive root. 
As we can see, in general,  $c_*(h)$ depends on the whole nonlinearity $g$ and not only on the value of its derivative at $0$.  The critical wavefront 
$u(t,x)= \phi_*( x+c_*(h)t)$  is called pushed if  $c_*(h)> c_\#(h)$. Previous studies of monotone model (\ref{MGD})  showed that pushed wavefronts have better stability properties in comparison with 
non-critical waves, cf. \cite{WNH}.  Particularly, this is due to the fast exponential decay at $-\infty$ of the profile $\phi_*(t)$.  Our next result shows that the latter characteristic property of 
 pushed wavefronts is also valid if the delay is relatively small: 
\begin{theorem} \label{pr1} Assume  all the condition of Theorem \ref{main1}  and take some $c > c_*(h)$ for $h \in [0,h_*]$. Then   the following asymptotic representation is valid (for an appropriate $s_0$ and some $\sigma >0$):  
\begin{equation}\label {afep}
(\phi_c, \phi_c')(t+s_0,c)= e^{\lambda_2 t}(1, \lambda_2) + O(e^{(\lambda_2+ \sigma) t}), \ t \to -\infty. 
\end{equation}
If, in addition, $g \in C^{1,\gamma}[0,\kappa]$,  $c=c_*(h)> c_\#(h)$  and $h \in [0,h_*]$, then  
\begin{equation}\label {afecp}
(\phi_c, \phi_c')(t+s_0,c)=e^{\lambda_1 t}(1, \lambda_1) + O(e^{(\lambda_1+\sigma) t}), \ t \to -\infty. 
\end{equation}
\end{theorem}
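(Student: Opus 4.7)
The plan is to extract the asymptotic behaviour of $\phi_c$ at $-\infty$ from the bilateral Laplace transform of the profile equation, using the Hölder bound (\ref{gcos}) to bootstrap the strip of analyticity up to the real zeros of $\chi(\cdot,c)$. First I would rewrite the profile equation in the perturbed linear form
\[
\phi_c''(t)-c\phi_c'(t)-\phi_c(t)+g'(0)\phi_c(t-ch)=-F(t),\qquad F(t):=g(\phi_c(t-ch))-g'(0)\phi_c(t-ch),
\]
and observe that $|F(t)|\le A\,\phi_c(t-ch)^{1+\gamma}$ by (\ref{gcos}). Since $\phi_c$ is monotone with $\phi_c(-\infty)=0$, a preliminary Gronwall-type bootstrap against the linearisation (whose characteristic function is precisely $\chi(\cdot,c)$) yields $\phi_c(t)+|\phi_c'(t)|=O(e^{\mu_0 t})$ as $t\to -\infty$ for some $\mu_0\in (0,\lambda_2)$.

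I then set $\widehat\phi(z):=\int_{-\infty}^{0}e^{-zt}\phi_c(t)\,dt$, which is holomorphic on $\{\Re z<\mu_0\}$. Integrating by parts on $(-\infty,0]$ produces the functional identity
\[
\chi(z,c)\,\widehat\phi(z)=P(z)-\widehat F(z),
\]
where $P(z)$ is entire (it collects $\phi_c(0)$, $\phi_c'(0)$ and the data of $\phi_c$ on $[-ch,0]$), while $\widehat F$ is holomorphic in the larger strip $\{\Re z<(1+\gamma)\mu_0\}$ thanks to the Hölder bound on $F$. Hence $\widehat\phi=(P-\widehat F)/\chi$ extends meromorphically to that wider strip, with poles only at the zeros of $\chi(\cdot,c)$ therein. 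Each such extension upgrades the exponential decay of $\phi_c$, and hence of $F$ and $\widehat F$; iterating, I reach the strip $\{\Re z<\lambda_2+\sigma\}$ for some $\sigma>0$, the sole possible singularity being a simple pole at $\lambda_2$. Simplicity follows from $\partial_z\chi(\lambda_2,c)\neq 0$, and the hypothesis $h\le h_*$ guarantees that no complex root of $\chi$ lies on $\{\Re z=\lambda_2\}$.

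Mellin's inversion formula with the contour shifted to $\{\Re z=\lambda_2+\sigma\}$ now captures the residue at $\lambda_2$ and produces
\[
\phi_c(t)=A\,e^{\lambda_2 t}+O(e^{(\lambda_2+\sigma)t}),\qquad t\to -\infty,
\]
the remainder integral being absolutely convergent since $|\chi(z,c)|\gtrsim|z|^2$ on vertical lines while $P-\widehat F$ stays bounded. Differentiating under the integral sign supplies the derivative component of (\ref{afep}). For $c>c_*(h)$ the coefficient $A$ must be non-zero---otherwise $\phi_c$ would decay at the faster rate $\lambda_1$, a behaviour I argue below is reserved for the pushed critical wave---and a horizontal translation absorbs $A$ into $s_0$.

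The pushed case $c=c_*(h)>c_\#(h)$ hinges on the key and, in my view, main obstacle: showing that the residue $A$ at $\lambda_2$ vanishes. I would argue by contradiction: were $A\neq 0$, then $\phi_{c_*}$ would obey the same expansion (\ref{afep}) as any supercritical wavefront. Because $\lambda_2$ is a simple root of $\chi(\cdot,c_*)$ and $g\in C^{1,\gamma}[0,\kappa]$, an implicit-function / contraction argument applied to the profile operator in an exponentially weighted Banach space (weight $e^{-\mu t}$ for $\mu$ slightly below $\lambda_2$) would perturb $\phi_{c_*}$ into a continuous family of wavefronts at speeds $c$ in a left neighbourhood of $c_*$, contradicting the minimality of $c_*$ asserted in Theorem~\ref{main1}. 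Therefore $A=0$, $\widehat\phi$ is analytic at $\lambda_2$, and the next real-axis singularity is the simple pole at $\lambda_1$; repeating the contour-shift argument with $\lambda_1$ in place of $\lambda_2$ yields (\ref{afecp}).
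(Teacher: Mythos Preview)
Your Laplace--transform bootstrap essentially reproves Proposition~\ref{pr1p}: once one knows the profile decays exponentially and $g$ satisfies (\ref{gcos}), the meromorphic continuation of $\widehat\phi$ across the strip gives $\phi_c(t)=A\,e^{\lambda_2 t}+O(e^{(\lambda_2+\sigma)t})$ with $A\in\R$. The paper simply cites this as a known asymptotic, so this part is fine if longer than necessary. Your argument for the pushed case is also essentially the paper's: if the critical profile had the slow $\lambda_2$-mode, one could continue it to smaller speeds and contradict minimality. The paper makes this rigorous by invoking \cite[Theorem~1.7]{LMS} (global continuation of monotone fronts), which is exactly the ``implicit-function in a weighted space'' step you sketch; the extra hypothesis $g\in C^{1,\gamma}[0,\kappa]$ is needed precisely there.

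The genuine gap is in the supercritical case $c>c_*(h)$: you assert $A\neq 0$ because ``$\lambda_1$ decay is reserved for the pushed critical wave'', but your subsequent argument only shows the converse implication (the critical wave \emph{does} have $\lambda_1$ decay). Nothing you have written excludes the possibility that some $\phi_c$ with $c>c_*$ also has $A=0$. Continuation from such a $\phi_c$ would produce wavefronts in a full neighbourhood of $c$---but those already exist, so no contradiction with minimality arises. The implication ``$\lambda_1$-decay $\Rightarrow$ $c=c_*$'' is simply not established by your method.

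The paper handles this point by an entirely different, constructive mechanism. For a given $c'>c_*(h)$ it picks a nearby $c_0\in(c_*(h),c')$ and reruns the super/sub-solution machinery of Section~\ref{Sp1}: the lower solution $\phi_-$ built there (a monotone front of an auxiliary sub-tangential equation) satisfies $\phi_-(t)\sim e^{\lambda_2(c')t}$ at $-\infty$, and the iteration scheme sandwiches $\phi_{c'}$ between $\phi_-(\cdot+t_*)$ and $\phi_+$. The lower bound $\phi_{c'}(t)\ge \phi_-(t+t_*)$ directly forces the leading coefficient at $\lambda_2$ to be positive, yielding (\ref{afep}) without any appeal to minimality. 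Your Laplace-transform route gives the \emph{form} of the expansion but cannot by itself decide whether the slow mode is present; for that you need either the comparison argument of Section~\ref{Sp1} or some other a priori lower bound on $\phi_c$.
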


\vspace{2mm}

\noindent Theorem \ref{pr1} refines the following well-known statement concerning the asymptotic representations of wavefront's profile 
at $-\infty$: 

\vspace{2mm}

\begin{proposition} \label{pr1p} Suppose that $g$ satisfies   ${\rm \bf(U)}$  and (\ref{gcos}) and  let  $u= \phi( x+ct)$ be a semi-wavefront for  (\ref{MGD}).  If, in addition,  $c >  c_\#(h)$,
 then   the following asymptotic representation is valid (for an appropriate $s_0, \ j \in \{1,2\}$ and some $\sigma >0$):  
\begin{equation}\label {afe}
(\phi, \phi')(t+s_0,c)= e^{\lambda_j t}(1, \lambda_j) + O(e^{(\lambda_j+ \sigma) t}), \ t \to -\infty. 
\end{equation}
If $c=c_\#(h)$, then there are some nonnegative $A, B$ such that $A+B>0$ and 
\begin{equation}\label {afec}
(\phi, \phi')(t+s_0,c)=(-At+B)e^{\lambda_j t}(1, \lambda_j) + O(e^{\lambda_j t}), \ t \to -\infty. 
\end{equation}
\end{proposition}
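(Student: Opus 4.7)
The plan is to reduce the asymptotics of $\phi$ at $-\infty$ to a Laplace-transform/residue calculation for a perturbation of a linear delay equation whose characteristic function is precisely $\chi(z,c)$. Writing $g(u) = g'(0)u + N(u)$ and integrating (\ref{gcos}) yields $|N(u)| \le (A/(1+\gamma))u^{1+\gamma}$ near $0$. Setting $L_c\psi := \psi'' - c\psi' - \psi + g'(0)\psi(\cdot-ch)$, the profile equation becomes $L_c\phi = -N(\phi(\cdot-ch))$, a linear delayed ODE perturbed by a term of order $\phi^{1+\gamma}$ at $-\infty$, and the characteristic equation of $L_c$ is exactly $\chi(z,c) = 0$.

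First, I would establish some crude exponential decay $\phi(t) = O(e^{\mu t})$, $t \to -\infty$, for a suitable $\mu > 0$. Since $\phi$ is bounded with $\phi(-\infty) = 0$, this follows from the integral representation of $\phi$ through a Green kernel for $L_c$ together with a fixed-point / bootstrap argument. With this in hand, the one-sided Laplace transform
$$
\Phi(z) := \int_{-\infty}^{0} e^{-zt}\phi(t)\,dt
$$
is analytic in $\{\operatorname{Re} z < \mu\}$, and the profile equation transforms into
$$
\chi(z,c)\,\Phi(z) = P(z) - \widehat N(z),
$$
where $P$ is entire (carrying the boundary terms at $t=0$ and the integral over $[-ch,0]$), while $\widehat N$ is analytic in the strictly wider strip $\{\operatorname{Re} z < (1+\gamma)\mu\}$ thanks to the $u^{1+\gamma}$ bound on $N$. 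Hence $\Phi$ admits a meromorphic extension to this wider strip, with poles only at zeros of $\chi(\cdot,c)$.

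The expansion then emerges from inverse Laplace transform and contour shifting past the rightmost singularity in $(\mu,(1+\gamma)\mu)$. If $c > c_\#(h)$, the positive real zeros $0 < \lambda_2 < \lambda_1$ of $\chi(\cdot,c)$ are simple; crossing the pole at $\lambda_j$ — with $j \in \{1,2\}$ indexing whichever root actually contributes a nonzero residue to $\Phi$ (the residue at $\lambda_2$ can vanish for pushed profiles) — produces the leading $e^{\lambda_j t}$ term in (\ref{afe}), the next singularity further to the right supplying $\sigma > 0$ and the remainder; the statement for $\phi'$ is obtained by differentiating under the contour. For $c = c_\#(h)$ the rightmost pole is a double zero of $\chi(\cdot,c)$, whose residue produces exactly the form $(-At+B)e^{\lambda_j t}$ of (\ref{afec}); the nonnegativity of $A,B$ comes from $\phi \ge 0$, while $A+B > 0$ follows because simultaneous vanishing would force a strictly faster decay and, after one more iteration of the bootstrap, contradict the finiteness of the zero set of $\chi(\cdot,c)$ in any right half-plane.

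The main obstacle is the bootstrap itself: one may need to iterate the Laplace argument several times, because intermediate strips may contain no singularity of $\chi(\cdot,c)$, so the decay rate must be improved in $(1+\gamma)$-steps until a root is reached. The critical case $c = c_\#(h)$ is the most delicate: the double-pole residue calculation is routine once the decay rate is pinned down, but ruling out $A = B = 0$ demands a separate non-degeneracy argument based on the fact that $\phi \not\equiv 0$ and hence cannot decay faster than every exponential.
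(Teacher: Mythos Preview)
Your proposal is correct and follows essentially the same route as the paper: crude exponential decay of $\phi$ at $-\infty$, rewriting the profile equation as a linear delayed equation with characteristic function $\chi(\cdot,c)$ perturbed by an $O(\phi^{1+\gamma})$ term, and then extracting the leading exponential via a Laplace-transform/contour-shifting argument, with non-super-exponential decay ruling out the degenerate case $A=B=0$. The paper's proof is much terser because it delegates each of these steps to external references (eventual monotonicity and exponential decay to \cite{TTT,AGT}, non-super-exponential decay to \cite{tt}, and the contour-shifting asymptotics to Mallet-Paret's Proposition~7.2 in \cite{MP}), whereas you have spelled out the underlying mechanism.
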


The structure of the remainder of this paper is as follows. Section \ref{fund} introduces suitable definitions of fundamental solutions for two linear  integro-differential operators and then analyzes their properties and  some relations existing  between them. The analysis in this  section offers an additional insight into  the 
properties of fundamental solutions earlier established  in \cite{TpT} by applying alternative, more technical approach. The use of  fundamental solutions as well as  `base functions' from \cite{tpt} are the key elements in the proof of Theorem \ref{main1} given in Section \ref{Sp1}. Next, Section \ref{poofs} contains short proofs of Proposition \ref{pr1p} and Theorem \ref{pr1}: we note that the proof of the latter theorem was substantially shortened due to studies realized in  \cite{LMS} (compare with the proof of Theorem 1.4 in \cite{tpt}). Such a simplification, however, required additional $C^{1,\gamma}$-smoothness property of $g(u)$.  Finally, in Section \ref{Stoy} we present detailed analytical and numerical studies of a `toy' model and prove Theorem \ref{main2}. 

 In Appendix, where the characteristic function of the variational equation at the positive steady state is analyzed, we further improve  some results established in  \cite[Lemma 1.1]{LMS}.  The obtained improvement is used in the next section. 

\section{A convolution factorization of the fundamental solution}\label{fund} 
Suppose that $u(x,t)= \phi(x+ct)$ is a wavefront solution of equation (\ref{MGD}). Then its profile $\phi$ satisfies the boundary value problem 
\begin{equation}\label{prfe}
\phi''(t) - c\phi'(t) - \phi(t) + g(\phi(t-ch)) =0, \quad \phi(-\infty)=0, \ \phi(+\infty) = \kappa. 
\end{equation}
By linearizing the above differential equation around the positive equilibrium $\kappa$, we obtain 
$$
y''(t) - cy'(t) - y(t) + g'(\kappa)y(t-ch) =0. 
$$
Considering exponential solutions $y(t) =e^{zt}$ of the latter equation,  we find that $z$ should satisfy 
$
\chi_\kappa(z) =  0, 
$
where the characteristic function $\chi_\kappa$ is given by 
$$
\chi_\kappa(z) =  z^2-cz -1 +g'(\kappa)e^{-zch}. 
$$
We will analyze the situation when $\chi_\kappa$ has exactly three real zeros, one positive and two negative (counting multiplicity), $\mu_3 \leq \mu_2 <0 < \mu_1$.  In such a  case, every complex zero $\mu_j$ of $\chi_\kappa$ is simple \cite[Lemma A.2]{TVN} and has its real part $\Re \mu_j < \mu_2$  \cite[Lemma 1.1]{LMS}.  Importantly, the latter estimate can be improved: in Appendix, we  show that actually $\Re \mu_j < \mu_3$ for each complex zero $\mu_j$ of $\chi_\kappa$. 

The set ${\mathcal D}_\kappa$ of all points
$(h,c) \in \R^2_+$ for which $\chi_\kappa$ has  three real zeros was described in \cite[Lemma 1.1 and Theorem 2.3]{LMS}: 
$$
{\mathcal D}_\kappa = [0,h_*] \times \R_+ \cup \{(h,c) \in \R^2_+: c \leq c_\kappa(h),\  h > h_*\}, 
$$
where $h_*>0$ was defined in Theorem  \ref{main1} and $c_\kappa: (h_*, +\infty) \to (0,+\infty),$  $c_\kappa(h_*+)=+\infty,$ $c_\kappa(+\infty)=0,$ is a decreasing smooth function
implicitly defined by 
\begin{equation}\label{ck+}
\hspace{-7mm}
\frac{2+\sqrt{c^4h^2+4c^2h^2+4}}{ec^2h^2|g'(\kappa)|}=\exp\left(\frac{\sqrt{c^4h^2+4c^2h^2+4}-c^2h}{2}\right), \ h > h_*.
\end{equation}
For each $(h,c) \in {\mathcal D}_\kappa$ we introduce the following integro-differential operators:
$$
(Dy)(t)=y''(t) - cy'(t) - y(t) + g'(\kappa)y(t-ch), 
$$
$$
(D_1y)(t)=y'(t) - \mu_2 y(t), 
$$
 $$
(D_2y)(t)=y'(t) - (c-\mu_2)y(t) - g'(\kappa)e^{-ch\mu_2}\int_{-ch}^0e^{-\mu_2 s}y(t+s)ds.
$$
\begin{lemma} \label{DDD} The operators $D_1$ and $D_2$ commute and 
$
Dy= D_1D_2y=D_2D_1y$ \ for every $y \in C^2(\R,\R)$.  
\end{lemma}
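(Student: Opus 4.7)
The plan is to verify both factorization identities $D_1 D_2 y = Dy$ and $D_2 D_1 y = Dy$ by direct computation; commutativity of $D_1$ and $D_2$ then drops out for free. The two ingredients are integration by parts applied to the convolution term in $D_2$, and the identity $\chi_\kappa(\mu_2) = 0$, i.e.\ $\mu_2^2 - c\mu_2 - 1 + g'(\kappa) e^{-\mu_2 ch} = 0$.

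For $D_1 D_2 y$ I would first differentiate $D_2 y$. The nontrivial piece is $\frac{d}{dt}\int_{-ch}^0 e^{-\mu_2 s} y(t+s)\,ds$, which I would rewrite via integration by parts in $s$ as a boundary term at $s=0$ (a multiple of $y(t)$), a boundary term at $s=-ch$ (proportional to $y(t-ch)$, with the $e^{-ch\mu_2}$ prefactor in $D_2$ canceling the $e^{\mu_2 ch}$ from the exponential evaluated at $s=-ch$, producing exactly $g'(\kappa) y(t-ch)$), and a bulk term proportional to $\mu_2$ times the original integral. Assembling $D_1 D_2 y = (D_2 y)' - \mu_2 D_2 y$ cancels the two integral contributions. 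What remains is $y''(t) - cy'(t) + \bigl[\mu_2 c - \mu_2^2 - g'(\kappa) e^{-ch\mu_2}\bigr] y(t) + g'(\kappa) y(t-ch)$, and the bracket collapses to $-1$ thanks to $\chi_\kappa(\mu_2)=0$, yielding $Dy$.

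For $D_2 D_1 y$ I would substitute $D_1 y = y' - \mu_2 y$ into $D_2$. The key observation is that $e^{-\mu_2 s}\bigl(y'(t+s) - \mu_2 y(t+s)\bigr) = \partial_s\bigl[e^{-\mu_2 s} y(t+s)\bigr]$, so the convolution integrates exactly to $y(t) - e^{\mu_2 ch} y(t-ch)$ with no residual bulk term. Combining with the remaining pieces of $D_2(D_1 y)$ produces the same expression as above, and the same reduction via $\chi_\kappa(\mu_2)=0$ again gives $Dy$.

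I do not anticipate any conceptual obstacle: the lemma is an algorithmic verification once one sees that integration by parts is the right tool and that the root condition on $\mu_2$ is precisely what forces the $y(t)$-coefficient to equal $-1$. The only mildly delicate point is the sign and exponential bookkeeping in the boundary terms, but even that is routine. Conceptually, the argument shows that the factorization $D = D_1 D_2 = D_2 D_1$ is nothing more than the operator-level manifestation of $\mu_2$ being a characteristic root of $D$.
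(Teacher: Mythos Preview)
Your proposal is correct and matches the paper's own proof essentially line for line: both compute $D_1D_2y$ and $D_2D_1y$ directly, handle the convolution term by integration by parts in $s$ (using $\partial_t y(t+s)=\partial_s y(t+s)$), and invoke $\chi_\kappa(\mu_2)=0$ to collapse the $y(t)$-coefficient to $-1$. Your write-up makes the role of the characteristic root slightly more explicit than the paper does, but there is no methodological difference.
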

\begin{proof} By a straightforward computation and integration by parts we obtain 
$$
D_1D_2y = y''(t) - (c-\mu_2)y'(t) - g'(\kappa)e^{-ch\mu_2}\int_{-ch}^0e^{-\mu_2 s}y'(t+s)ds $$
$$- \mu_2\left(y'(t) - (c-\mu_2)y(t) - g'(\kappa)e^{-ch\mu_2}\int_{-ch}^0e^{-\mu_2 s}y(t+s)ds\right)= 
$$
$$
y''-cy'+(\mu_2 c -\mu_2^2-g'(\kappa)e^{-ch\mu_2})y(t)+g'(\kappa)y(t-ch)= Dy. 
$$
Similarly, 
$$
D_2D_1y = D_2(y'-\mu_2 y)= (y''-\mu_2 y') - (c-\mu_2)(y'-\mu_2 y)$$
$$-g'(\kappa)e^{-ch\mu_2}\int_{-ch}^0e^{-\mu_2 s}(y'(t+s)-\mu_2 y(t+s))ds= Dy. 
$$
Again, to prove the latter equality,  we have to integrate by parts. \end{proof}

\begin{definition}  Consider $(h,c) \in {\mathcal D}_\kappa$. We define the fundamental solution $\psi(t)$ of  equation 
$D_2y=\delta(t)$ where $\delta(t)$ is the Dirac $\delta$-function in the following way: 
$$\psi(t) = -\frac{\mu_1-\mu_2}{\chi'_\kappa(\mu_1)}e^{\mu_1 t}, \quad t < 0,$$
and if  $t>0$ then $\psi(t)$ coincides with solution of the functional differential equation 
$(D_2 y)(t)=0$ subject to the initial conditions 
\begin{equation}\label{ivc}
y(0) = \psi(0-)+1 = 1-  \frac{\mu_1-\mu_2}{\chi'_\kappa(\mu_1)},  \quad y(s)= \psi(s), \quad s \in [-ch,0). 
\end{equation}
In this way, $(D_2\psi)(t)=0$ for all $t \not=0$ and  $\psi(0)-\psi(0-)=1$. 
\end{definition}
\begin{lemma} \label{DDD2} The fundamental solution $\psi(t)$ is negative: $\psi(t) <0$ for all $t\in \R$
and exponentially decaying at $\pm \infty$. \end{lemma}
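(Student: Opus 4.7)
The plan is to derive Lemma~\ref{DDD2} from the factorization $D=D_1D_2$ of Lemma~\ref{DDD} together with the known properties of the $D$-fundamental solution $G$ from \cite{TpT}. Taking symbols gives $\chi_\kappa(z)=(z-\mu_2)\widetilde D_2(z)$, so $1/\widetilde D_2(z)=(z-\mu_2)/\chi_\kappa(z)$, and hence, formally,
\begin{equation*}
\psi(t)=\frac{1}{2\pi i}\int_{\sigma-i\infty}^{\sigma+i\infty}e^{zt}\,\frac{z-\mu_2}{\chi_\kappa(z)}\,dz,\qquad \sigma\in(\mu_2,\mu_1),
\end{equation*}
equivalently $G(t)=\int_{-\infty}^{t}e^{\mu_2(t-s)}\psi(s)\,ds$ and $\psi=D_1 G=e^{\mu_2 t}(e^{-\mu_2 t}G)'$. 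These three representations will be the main tools.

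On $t<0$ the definition gives $\psi(t)=-[(\mu_1-\mu_2)/\chi'_\kappa(\mu_1)]e^{\mu_1 t}$. Because $\chi_\kappa(z)\to+\infty$ as $z\to+\infty$ and $\mu_1$ is the rightmost real zero, $\chi'_\kappa(\mu_1)>0$; together with $\mu_1-\mu_2>0$ and $\mu_1>0$, this yields $\psi<0$ on $(-\infty,0)$ and exponential decay as $t\to-\infty$. For $t>0$ I would close the contour above to the left: the pole at $\mu_2$ is cancelled by the numerator, so only the residues at $\mu_3$ and at the complex zeros of $\chi_\kappa$ contribute. By the Appendix-strengthened estimate of \cite[Lemma 1.1]{LMS}, all complex zeros satisfy $\Re\mu_j<\mu_3$, so the residue at $\mu_3$, equal to $[(\mu_3-\mu_2)/\chi'_\kappa(\mu_3)]e^{\mu_3 t}$, dominates as $t\to+\infty$. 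Since $\chi'_\kappa(\mu_3)>0$ (because $\chi_\kappa$ crosses zero from below at its leftmost real root) and $\mu_3-\mu_2\le 0$, this residue is non-positive, which at once delivers the claimed exponential decay of $\psi$ at $+\infty$ with rate $|\mu_3|$ and the negativity of $\psi$ for all sufficiently large $t$.

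The principal remaining obstacle is the pointwise negativity of $\psi$ on the whole half-line $(0,+\infty)$, which the residue formula alone does not provide. I would deal with it through the identity $\psi(t)=e^{\mu_2 t}H'(t)$ with $H(t):=e^{-\mu_2 t}G(t)$: the asymptotics of $G$ from \cite{TpT} give $H(-\infty)=0$ and $H(+\infty)=1/\chi'_\kappa(\mu_2)<0$, and the negativity of $G$ forces $H\le 0$, so that Lemma~\ref{DDD2} is equivalent to strict monotonicity of $H$ on $\mathbb{R}$. This monotonicity I would prove by the method of steps applied to $D_2\psi=0$ on $(0,+\infty)$, propagating sign forward through successive intervals $[kch,(k+1)ch]$: the positivity of the kernel $e^{-\mu_2 s}$ for $s\in[-ch,0]$ together with the favourable sign $-g'(\kappa)e^{-ch\mu_2}>0$ of the integral-term coefficient in $D_2$ makes sign propagation work, provided one first verifies the jump inequality $\psi(0^+)=1-(\mu_1-\mu_2)/\chi'_\kappa(\mu_1)<0$, equivalently $\mu_1+\mu_2-c>g'(\kappa)\,ch\,e^{-\mu_1 ch}$. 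Establishing this jump inequality is the step I expect to be the most delicate, and is the place where the defining restriction $(h,c)\in\mathcal{D}_\kappa$---through the explicit form of $h_*$ and of the curve $c_\kappa$ defined by \eqref{ck+}---must actually be used.
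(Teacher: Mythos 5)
Your reduction to the asymptotics at $+\infty$ (through the transform $(z-\mu_2)/\chi_\kappa(z)$, the removable singularity at $\mu_2$, and the Appendix estimate $\Re\mu_j<\mu_3$ for complex zeros) and your treatment of $t<0$ and of the sign of the residue at $\mu_3$ are in the spirit of the paper, which does the same via the one-sided Laplace transform and \cite[Proposition 7.2]{MP}. The genuine gap is in the step you dismiss as routine: the pointwise negativity of $\psi$ on $(0,+\infty)$. Forward ``sign propagation'' by the method of steps does not work here, because the delayed term has the wrong sign for it: writing $D_2\psi=0$ as $\psi'(t)=(c-\mu_2)\psi(t)+g'(\kappa)e^{-ch\mu_2}\int_{-ch}^0 e^{-\mu_2 s}\psi(t+s)\,ds$ with $g'(\kappa)<0$, a negative history makes the integral term a \emph{positive} contribution to $\psi'$. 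In particular, at a first (transversal) zero $t_1$ of $\psi$ one gets $\psi'(t_1)>0$, which is perfectly consistent with an upward crossing — no contradiction, and negativity is not inherited from one interval $[kch,(k+1)ch]$ to the next. This is precisely why the paper does something more delicate: using the already established asymptotics $\psi(t)=e_3(t)+O(e^{(\mu_3-\delta)t})$ with $e_3(t)=\frac{\mu_3-\mu_2}{\chi'(\mu_3)}e^{\mu_3 t}<0$, it considers the family $\psi+p\,e_3$, $p\ge 0$, takes the \emph{minimal} $p_0$ for which $\psi+p_0e_3\le 0$ on $[0,+\infty)$, and at a leftmost touching point $t_1$ both the value and the derivative vanish, so $0=D_2(\psi+p_0e_3)(t_1)=-g'(\kappa)e^{-ch\mu_2}\int_{-ch}^0e^{-\mu_2 s}(\psi+p_0e_3)(t_1+s)\,ds<0$, a contradiction. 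Your detour through $G$ and $H=e^{-\mu_2 t}G$ does not help with this: $G<0$ and the limits of $H$ do not imply monotonicity of $H$, which is equivalent to the lemma itself, so the real work is still the step your sketch leaves unsupported.

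Two smaller points. First, you single out the jump inequality $\psi(0^+)<0$ as the delicate place where the explicit form of $h_*$ and of the curve (\ref{ck+}) must enter; in fact it is the easy part: the paper gets it from the mean value theorem applied to $\chi_\kappa(\mu_1)-\chi_\kappa(\mu_2)=0$, which gives $\mu_1+\mu_2-c=chg'(\kappa)e^{-\theta ch}<chg'(\kappa)e^{-\mu_1 ch}$ and hence $\chi'_\kappa(\mu_1)\psi(0)<0$, using only that $\mu_2<\mu_1$ are real zeros, i.e.\ only $(h,c)\in\mathcal D_\kappa$. Second, your contour argument implicitly assumes $\mu_3<\mu_2$ (so that the pole at $\mu_2$ is cancelled and $\mu_3$ is a simple dominant root); the boundary case $c=c_\kappa(h)$, where $\mu_3=\mu_2$ is a double root, needs separate treatment — the paper handles it by a limiting argument in $c$ (its Step 2) combined with the same leftmost-tangency contradiction to upgrade $\psi\le 0$ to $\psi<0$.
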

\begin{proof} Clearly, $\chi'_\kappa(\mu_1) >0$ and therefore 
$\psi(t) <0$ for all $t<0$.  Next, since
$$
0 = \chi_\kappa(\mu_1)-\chi_\kappa(\mu_2) = (\mu_1+\mu_2 -c)(\mu_1-\mu_2) -chg'(\kappa)e^{-\theta ch}(\mu_1-\mu_2), \quad \theta \in (\mu_2,\mu_1), 
$$
we find that 
$$
 \mu_1+\mu_2 -c=chg'(\kappa)e^{-\theta ch} < chg'(\kappa)e^{-\mu_1 ch},
$$
and therefore $\psi(0)<0$ because of 
$$
\chi'_\kappa(\mu_1) \psi(0)= \chi'_\kappa(\mu_1)  -(\mu_1-\mu_2) = \mu_1 +\mu_2 -c -chg'(\kappa)e^{-\mu_1 ch}<0. 
$$
We claim that $\psi(t) <0$ for all  $t >0$. 

\vspace{2mm}

\underline{\sl Step 1}. First, assuming that $c < c_\kappa(h)$, we  find an appropriate asymptotic representation of $\psi$ at $+\infty$. 
As a solution of linear functional differential equation,  $\psi(t)$ has at most exponential growth at $+\infty$ (see  \cite[Section 1.3]{Hale}) and therefore we 
can apply the Laplace transform method to 
$$
\psi'(t) - (c-\mu_2)\psi(t) - g'(\kappa)e^{-ch\mu_2}\int_{-ch}^0e^{-\mu_2 s}\psi(t+s)ds =0, 
$$
taking into account conditions (\ref{ivc}). Let $\Psi(z)$ denote the Laplace transform of $\psi(t)$. After some easy computations we get 
$$
0=z\Psi(z) - \psi(0)- (c-\mu_2)\Psi(z) - \frac{g'(\kappa)(e^{-zch}-e^{-\mu_2 ch})}{\mu_2-z}\Psi(z)+A(z) 
$$
$$
=\Psi(z)\frac{\chi_\kappa(z)}{z-\mu_2} - \psi(0)+A(z), 
$$
where entire function 
$A(z)$ is given by 
$$
A(z) = \frac{g'(\kappa)e^{-ch\mu_2}(\mu_1-\mu_2)}{\chi_\kappa'(\mu_1)}\int_{-ch}^0e^{-(\mu_2-z)v}dv\int_v^0e^{(\mu_1-z)s}ds.
$$ 
The following properties of $A(z)$ can be easily checked:
$$
A(\mu_1)=  \psi(0), \quad A(\mu_2)=  A(\mu_3)= \psi(0)-1.  
$$
Consequently, for each pair $(h,c)\in \R^2_+$ such that $c < c_\kappa(h)$, the function 
$$
\Psi(z) = \frac{z-\mu_2}{\chi_\kappa(z)}\left(\psi(0)-A(z)\right), 
$$
 is meromorphic  on $\C$ and analytic on the half-plane $\Re z > \mu_3$. Thus the function 
 $$
 \Psi_1(z) = \Psi(z) -\frac{\mu_3-\mu_2}{\chi'(\mu_3)(z-\mu_3)}
 $$
 is analytic on the half-plane $\Re z > \mu_3-\delta$ for sufficiently small $\delta >0$.  Observe that the fraction in the above representation 
 corresponds to the Laplace transform of the negative eigenfunction 
 $$
e_3(t)= \frac{\mu_3-\mu_2}{\chi'(\mu_3)}e^{\mu_3 t} 
 $$
 for the operators $D$ and $D_2$. 
 
 Next,  $\psi(t)-e_3(t)$ is $C^2-$smooth for $t > 2ch$ and  we find that $(D(\psi-e_3))(t) = (D_1D_2(\psi-e_3))(t) =0$ for all $t > 2ch$.  Taking into account 
 the analyticity of $\Psi_1(z)$ on the half-plane $\Re z > \mu_3-\delta$ for sufficiently small $\delta >0$, in view of \cite[Proposition 7.2]{MP}, we find that 
 $$
 \psi(t)-e_3(t) = O(e^{(\mu_3-\delta)t}), \quad t \to +\infty. 
 $$
This means that 
$$\psi(t) = \frac{\mu_3-\mu_2}{\chi'(\mu_3)}e^{\mu_3 t} + O(e^{(\mu_3-\delta)t})<0, \quad t \to +\infty,
$$
so  that $\psi(t) <0$ for all sufficiently large $t >0$.  

Suppose now that $\psi(t_0)\geq 0$ for some $t_0>0$. Consider the family of functions
$$
\psi(t,p) = \psi(t) +pe_3(t),  \quad p \geq 0. 
$$
Since $e_3(t)<0, \ t \in \R$,  there exists the smallest $p_0 \geq 0$ such that $\psi(t,p_0) \leq 0$ for all $t \geq 0$.  Then   $\psi(t_1,p_0) = \psi'(t_1,p_0)  =0$ for some leftmost $t_1>0$. 
Clearly, $D_2\psi(t,p_0) =0$ for all $t>0$. In particular, 
$$
0=D_2\psi(t_1,p_0) = \psi'(t_1,p_0) - (c-\mu_2)\psi(t_1,p_0) - g'(\kappa)e^{-ch\mu_2}\int_{-ch}^0e^{-\mu_2 s}\psi(t_1+s,p_0)ds
$$
$$ = - g'(\kappa)e^{-ch\mu_2}\int_{-ch}^0e^{-\mu_2 s}\psi(t_1+s,p_0)ds<0, 
$$
a contradiction proving that actually $\psi(t) <0$ for all $t \geq 0$. 

\vspace{2mm}

\underline{\sl Step 2}.  Now, consider the case when $\bar c=c_\kappa(\bar h)$ and take an increasing sequence of positive $c_j$ converging to $\bar c$. 
We will use the notation $\psi(t, h, c)$ to show dependence of the fundamental solution of  equation 
$D_2y=\delta(t)$ on parameters $h,c$. In view of continuous dependence of solutions of the functional differential equation $D_2y=0$ on parameters and initial data, 
we obtain  that $\psi(t, \bar h, c_j)$ converges to $\psi(t, \bar h, \bar c)$ uniformly on compact subsets of   $[0,+\infty)$. Consequently, by Step 1, we conclude that 
 $\psi(t, \bar h, \bar c) \leq 0$ for all $t \geq 0$. Suppose that $\psi(t_1,\bar h, \bar c) = 0$  for some leftmost $t_1$. Then  $t_1 > 0$ and  
$\psi'(t_1,\bar h, \bar c)  =0$.  However, arguing as at the end of Step 1, we  get immediately the contradiction $0=D_2\psi(t_1,\bar h, \bar c) <0$.  Hence, $\psi(t, \bar h, \bar c) < 0$ for all $t \geq 0$.
 \end{proof}
 Let $f:\R \to \R$ be a bounded continuous function, then the convolution 
 $$
 y(t) = \int_\R\psi(t-s)f(s)ds =-\frac{\mu_1-\mu_2}{\chi'_\kappa(\mu_1)} \int^{+\infty}_t e^{\mu_1 (t-s)}f(s)ds+\int^t_{-\infty}\psi(t-s)f(s)ds,
 $$
 is bounded and continuously differentiable function satisfying  the functional differential equation $D_2y=f$.  We can use this 
 fact to solve the second-order equation $Dy=f$. Indeed, the equation $D_1(D_2y)=f$ has a unique bounded solution 
 $$
 (D_2y)(t) = \int_{-\infty}^te^{\mu_2(t-s)}f(s)ds= (\theta*f)(t), 
 $$ 
where
$$
\theta(t) = e^{\mu_2 t}, \quad t \geq 0, \qquad  \theta(t) =0, \quad t <0,  
$$
is {\it the fundamental solution} of $D_1y=\delta(t)$. Consequently, the equation $D_1(D_2y)=f$ has a unique bounded solution 
$$
y = \psi*(\theta*f)= (\psi*\theta)*f. 
$$
The function $N=\psi*\theta$ is called the {\it fundamental solution} of the equation $Dy=\delta(t)$,
clearly, $N(t)<0$ for all $t \in \R$, this property was earlier established in  \cite{TpT} by using an alternative  (and more technical) approach.

\section{Proof of Theorem \ref{main1}}\label{Sp1}

By \cite[Theorem 8]{TpT}, for $h \in [0,h_*]$  each traveling wave $u(t,x)=\phi(x+ct)$  has  strictly increasing profile $\phi(t)$, moreover, $\phi'(t) >0$ for all $t \in \R$ (see also   \cite[Lemma 6]{TTT}).  The same theorem in \cite{TpT} assures that $\phi(t)$ is unique up to  translation. Next, if $h > h_*$ then there exists $\tilde c> c^*(h)$ such that $(h, \tilde c) \not \in   {\mathcal D}_\kappa$. Due to Theorem 1.7 in \cite{LMS},  traveling wave propagating with the speed $\tilde c$ is not monotone. This establishes the optimal nature of the bound $h_*$. 
In this way, we  have only to prove that for each fixed  $h \in [0,h_*]$, the set of all possible wave speeds is a
connected interval of the form $[c_*(h), +\infty)$. 
The next assertion provides one of the key arguments for the proof of   Theorem \ref{main1}: 
\begin{lemma} \label{LAP} Suppose that $g$ satisfies  (\ref{gcos})  with {\rm \bf(U)} and $(h,c) \in {\mathcal D}_\kappa$  (so that $\phi'(t) > 0$)
be such that 
\begin{equation}\label{hka}
1+hg'(\kappa)e^{-\mu_2(c) ch} >0. 
\end{equation}
Then 
$$
\phi'(t)+  hg'(\kappa)\phi'(t-\bar ch) >0, \quad t \in \R
$$
for  every wavefront $\phi(x+ct)$ of equation (\ref{MGD})  which propagates with the speed  $c$ and each $\bar c $  for which $1+hg'(\kappa)e^{-\mu_2(c) \bar ch} >0$. 
\end{lemma}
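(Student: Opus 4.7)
I would prove the lemma by exploiting the convolution factorization developed in Section \ref{fund}. Differentiating the profile equation (\ref{prfe}) for the monotone wavefront $\phi$ shows that $u := \phi' > 0$ satisfies
\begin{equation*}
(Du)(t) = f(t), \qquad f(t) := \bigl[g'(\kappa) - g'(\phi(t-ch))\bigr]\, u(t-ch).
\end{equation*}
Because $g'(\kappa) = \min_{[0,\kappa]} g' < 0$, one has $f \leq 0$, and because $\phi$ is a nontrivial front while $g'$ is not identically equal to $g'(\kappa)$ on the range of $\phi$, $f \not\equiv 0$.

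Next, I would represent $u$ through the fundamental solution as $u = N * f$, where $N = \psi * \theta$. The ingredients are: $\phi'$ is bounded on $\R$ (standard for a monotone front with finite limits, via the profile equation and the uniform continuity of $\phi'$); $N \in L^1(\R)$ by the exponential decay of $\psi$ and $\theta$ at $\pm\infty$; and $Dw = 0$ admits no nontrivial solution bounded on all of $\R$, since by the Appendix refinement $\Re \mu_j < \mu_3 < 0$ for every non-real zero of $\chi_\kappa$, while $\mu_2, \mu_3 < 0 < \mu_1$, so every exponential solution of $Dw = 0$ blows up at $-\infty$.

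With the representation in hand, I would compute
\begin{equation*}
v(t) := u(t) + h g'(\kappa)\, u(t - \bar c h) = \int_{\R} M(t-s) f(s)\, ds,
\end{equation*}
with $M(\tau) := N(\tau) + h g'(\kappa)\, N(\tau - \bar c h)$. Since convolution commutes with translation, $M = \psi * \Theta$ where
\begin{equation*}
\Theta(s) := \theta(s) + h g'(\kappa)\, \theta(s - \bar c h).
\end{equation*}
A direct case analysis gives $\Theta(s) = 0$ for $s < 0$; $\Theta(s) = e^{\mu_2 s} > 0$ for $s \in [0, \bar c h)$; and $\Theta(s) = e^{\mu_2 s}\bigl(1 + h g'(\kappa) e^{-\mu_2 \bar c h}\bigr)$ for $s \geq \bar c h$, the last factor being strictly positive precisely by the hypothesis on $\bar c$. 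Hence $\Theta \geq 0$ with $\Theta > 0$ on $(0,+\infty)$, and Lemma \ref{DDD2} ($\psi < 0$ everywhere) yields $M(\tau) < 0$ for every $\tau \in \R$. Since $f \leq 0$ and $f \not\equiv 0$, I conclude $v(t) > 0$ for every $t \in \R$, as required.

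\textbf{Main obstacle.} The technically delicate step is justifying the representation $u = N * f$: it relies on the boundedness of $\phi'$ on the whole line together with the uniqueness of bounded solutions to $Dw = f$, the latter depending essentially on the Appendix refinement of \cite[Lemma 1.1]{LMS} ruling out bounded modes of $D$ on $\R$. Once that representation is secured, the argument reduces to the algebraic identity $M = \psi * \Theta$ and a short case-by-case sign check on $\Theta$, where hypothesis (\ref{hka}) (and its analogue for $\bar c$) enters exactly as the positivity threshold for the tail $s \geq \bar c h$.
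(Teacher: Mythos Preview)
Your argument is correct and reaches the conclusion, but by a different route than the paper. The paper does not invoke the full representation $u = N*f$; instead, from $Dz = b$ (with $z=\phi'$, $b\le 0$) it uses only the factorization $D = D_2D_1$ to obtain $D_1 z = \psi * b \ge 0$, i.e.\ $(\phi'(t)e^{-\mu_2 t})' \ge 0$. The monotonicity of $\phi'(t)e^{-\mu_2 t}$ then gives the pointwise bound $\phi'(t-\bar c h) \le e^{-\mu_2 \bar c h}\phi'(t)$, from which
\[
\phi'(t)+hg'(\kappa)\phi'(t-\bar c h)\ \ge\ \phi'(t-\bar c h)\bigl(e^{\mu_2 \bar c h}+hg'(\kappa)\bigr)>0
\]
follows in one line. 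So the paper inverts only $D_2$ (needing just boundedness of $D_1 z$), whereas you invert all of $D$ and then manipulate the convolution kernel, writing $v = M*f$ with $M=\psi*\Theta$ and checking $\Theta\ge 0$. Your approach makes the role of the hypothesis $1+hg'(\kappa)e^{-\mu_2 \bar c h}>0$ very transparent (it is exactly the positivity of $\Theta$ on $[\bar c h,\infty)$); the paper's approach is marginally lighter on the functional-analytic side and yields as a by-product the structural fact that $\phi'(t)e^{-\mu_2 t}$ is non-decreasing, which could be of independent use. One small slip in your write-up: not every exponential solution of $Dw=0$ blows up at $-\infty$ (the mode $e^{\mu_1 t}$ with $\mu_1>0$ blows up at $+\infty$), but your intended conclusion---that $\chi_\kappa$ has no zeros on the imaginary axis, hence $Dw=0$ has no nontrivial bounded solution on~$\R$---is correct. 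The paper's proof also contains a preliminary geometric discussion showing that (\ref{hka}) holds automatically for all $h\in[0,h_*]$; this is not needed for the lemma's stated conclusion but is used later, so you may want to record it.
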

\begin{proof} Set 
$$
{\mathcal D}_*=\{(h,c) \in \R_+^2: 1+hg'(\kappa)e^{-\mu_2(c) ch} >0\}. 
$$
Then the boundary of ${\mathcal D}_*$ consists of the union of the  half-line $(0,c), c \geq 0$, with  the interval $(0,h), 0 \leq h <-1/g'(\kappa)$, and with the graph $\Gamma_1$
defined by the system of equations 
\begin{equation}\label{sy1}
1+hg'(\kappa)e^{-\mu ch} =0, \ \mu^2-c\mu-1+g'(\kappa)e^{-c\mu h} =0. 
\end{equation}
On the other hand, the graph $\Gamma_2$ of $c=c_\kappa(h)$ is defined by the system 
$$
2\mu -c -chg'(\kappa)e^{-\mu ch} =0, \ \mu^2-c\mu-1+g'(\kappa)e^{-c\mu h} =0. 
$$ 
If $\Gamma_1$ and $\Gamma_2$ intersect at some point $(h',c')$ then necessarily $\mu =0$, a contradiction. 
Thus we conclude that 
$\Gamma_1$ belongs to  the interior of the set ${\mathcal D}_\kappa$.  In fact, solving (\ref{sy1}), we find that 
$$
\mu= \frac{c-\sqrt{c^2+4+4/h}}{2}= \frac{1}{ch} \ln(h|g'(\kappa|), 
$$
from which we obtain the equation $c=c(h)$ for the curved part of the boundary of ${\mathcal D}_*$:
\begin{equation}\label{c(h)}
c(h)= \frac{-\ln(h|g'(\kappa)|)}{\sqrt{h(1+h+\ln(h|g'(\kappa)|))}}, \quad h_* < h \leq h^*:=1/|g'(\kappa)|. 
\end{equation}
Clearly, $c=c(h)$ is strictly decreasing function with $c(h_*^+)=+\infty, \ c(h^*)=0$.  Therefore condition (\ref{hka}) is automatically satisfied for all $h \in [0,h_*]$. 

Now, if $u(t,x) = \phi(x +ct)$ is a monotone wavefront to (\ref{MGD}), then $\phi$ satisfies 
$$
\phi''(t) - c\phi'(t) - \phi(t) + g(\phi(t-ch)) =0. 
$$
Set $z(t)= \phi'(t)>0$. By differentiating the latter equation, we find that 
$$
z''(t)-cz'(t) - z(t) +g'(\phi(t-ch))z(t-ch)=0, 
$$
or, equivalently, 
 \begin{equation*}\label{e41}
(D_2D_1)z(t)= z''(t)-cz'(t) - z(t) + g'(\kappa)z(t-ch)=b(t),
\end{equation*}
where 
$$
b(t)= a(t)z(t-ch) \leq 0, \quad  a(t)= g'(\kappa)- g'(\phi(t-ch)) \leq 0,\quad t \in \R. 
$$
Consequently, 
$$
(D_1z)(t) =z'(t) -\mu_2 z(t) = (\psi*b)(t) \geq 0, 
$$
so that $(z(t)e^{-\mu_2 t})' \geq 0$ and 
$$
z(t-\bar ch)e^{-\mu_2(t-\bar ch)} = \phi'(t-\bar ch)e^{-\mu_2(t-\bar ch)} \leq  \phi'(t)e^{-\mu_2 t} = z(t)e^{-\mu_2 t}, \quad \ t\in \R. 
$$
Hence, $ \phi'(t-\bar ch)e^{\mu_2 \bar ch} \leq  \phi'(t)$ and 
$$
\phi'(t)+  hg'(\kappa)\phi'(t-\bar ch) \geq  \phi'(t-\bar ch)e^{\mu_2 \bar ch} +  hg'(\kappa)\phi'(t-\bar ch) $$
$$= \phi'(t-\bar ch)(e^{\mu_2 \bar ch} +  hg'(\kappa))>  0, \quad t \in \R. 
$$
This completes the proof of Lemma \ref{LAP}. 
\end{proof}
\begin{corollary}\label{Cor1} Let wavefront $u= \phi(x+c_0t)$  be such that  $c_0, h$ satisfy (\ref{hka}). Then 
 $$
\phi'(t)+  hg'(\kappa)\phi'(t-\hat ch) >0, \quad t \in \R, 
$$
whenever $\hat c \in [c_0,c_0+\nu]$ and $\nu >0$ is sufficiently small number. 

\end{corollary}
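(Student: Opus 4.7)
The plan is to derive Corollary \ref{Cor1} as a direct continuity consequence of Lemma \ref{LAP}, since the content of the corollary is nothing more than saying that the lemma's conclusion persists under a small perturbation of the auxiliary shift parameter.

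First I would emphasize that in Lemma \ref{LAP} the wave speed $c$ (tied to the given wavefront) and the auxiliary parameter $\bar c$ are decoupled: the lemma's conclusion holds for \emph{every} $\bar c$ satisfying the admissibility condition $1+hg'(\kappa)e^{-\mu_2(c)\bar c h}>0$, in which $\mu_2(c)$ depends only on the wave speed. Choosing $c=c_0$ and $\bar c=c_0$ reduces this admissibility condition to exactly (\ref{hka}), which holds by hypothesis.

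Next, with the wave speed fixed at $c=c_0$, I would study how the admissibility condition behaves as a function of the shift parameter alone. Set $F(\bar c):= 1+hg'(\kappa)e^{-\mu_2(c_0)\bar c h}$; then $F$ is smooth in $\bar c$, and $F(c_0)>0$ by assumption. By continuity there exists $\nu>0$ such that $F(\hat c)>0$ for every $\hat c\in[c_0,c_0+\nu]$. For any such $\hat c$, Lemma \ref{LAP} applied with $c=c_0$ and $\bar c=\hat c$ delivers $\phi'(t)+hg'(\kappa)\phi'(t-\hat c h)>0$ for all $t\in\R$, which is precisely the statement of the corollary.

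I do not anticipate any genuine obstacle: the result is essentially Lemma \ref{LAP} combined with one continuity step and a strict inequality argument on a one-parameter family. The only point worth a careful check is making sure that the admissibility condition inside Lemma \ref{LAP} really constrains $\bar c$ independently of the wave speed (through the already-fixed value $\mu_2(c_0)$), rather than implicitly enforcing $\bar c=c$; a careful reading of the lemma's statement and its proof confirms this decoupling, since in the proof the shift $\phi'(t-\bar c h)$ is used only through the monotonicity estimate $\phi'(t-\bar c h)e^{\mu_2\bar c h}\leq \phi'(t)$, which is valid for arbitrary $\bar c\geq 0$.
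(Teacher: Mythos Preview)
Your argument is correct and is precisely the intended one: the paper states Corollary \ref{Cor1} without proof, as an immediate consequence of Lemma \ref{LAP}, and your continuity argument on the function $F(\bar c)=1+hg'(\kappa)e^{-\mu_2(c_0)\bar c h}$ together with the decoupling of $c$ and $\bar c$ in that lemma is exactly how the corollary follows. Your closing observation about the monotonicity estimate $\phi'(t-\bar c h)e^{\mu_2\bar c h}\leq \phi'(t)$ in the lemma's proof confirms the decoupling and is accurate.
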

Now, fix some $h \in  (0,h_*]$ and consider 
$$
{\mathcal{C}}(h) := \{ c \geq 0: \ {\rm equation} \ (\ref{MGD}) \ {\rm has\ a\ wavefront \ propagating \ at \ the \ velocity\ } c \}.
$$
It is known from \cite{tt}, that  ${\mathcal{C}}(h) $ contains the subinterval $[c^*(h), +\infty)$ while 
$$
c_*: =\inf {\mathcal{C}}(h) \geq c_\# >0. 
$$
It is easy to see that ${\mathcal{C}}(h)$ is closed so that $c_* \in {\mathcal{C}}(h)$. 
Assume  that 
$c_0 \in {\mathcal{C}}(h)\cap [c_\#(h), c^*(h))$ and let $u(t,x) = \phi(x +c_0t)$ be  a wavefront solution, 
$$
\phi''(t) - c_0\phi'(t) - \phi(t) + g(\phi(t-c_0h)) =0. 
$$
Then take $\nu$ as in Corollary \ref{Cor1} and let 
$c' \in  [c_\#(h), c^*(h))$, $c' - c_0 \in (0,\nu)$ be small enough to satisfy  $(1+\gamma) \lambda_2(c') > \lambda_2(c_0)$.
Note that $\lambda_2(c)$ is a decreasing function of $c$.  
To simplify the notation, we will write  $\lambda_2' := \lambda_2(c'), \ \lambda_2:=\lambda_2(c_0)$. For the reader's convenience, the proof of of Theorem \ref{main1} is divided into several steps.  

\vspace{2mm}

\noindent \underline{Step I (Properties of an auxiliary function $\phi_\sigma$).} Set $\phi_\sigma(t): = \sigma \phi(t)$, where  $\sigma >1$ is close to $1$. We have 
$$
E(t,\sigma):= \phi_\sigma''(t) - c'\phi'_\sigma(t) - \phi_\sigma(t) + g(\phi_\sigma(t-c'h)) =
$$
$$
\phi_\sigma''(t) - c_0\phi'_\sigma(t) - \phi_\sigma(t) + \sigma g(\phi(t-c_0h)) + [(c_0-c')\phi'_\sigma(t)+ g(\phi_\sigma(t-c'h)) - \sigma g(\phi(t-c_0h))] = 
$$
$$
 (c_0-c')\phi'_\sigma(t)+ g(\phi_\sigma(t-c'h)) - \sigma g(\phi(t-c_0h)).
$$
By our assumptions, $|g(x) - g'(0)x|\leq Ax^{1+\gamma}, \ x \in[0,\delta]$. Take some  $t_*$ such that $\phi(t_*-c_0h)< \delta/2$. Then 
for all $1< \sigma  \leq 2$ and $t \leq t_*$, 
$$
g(\phi_\sigma(t-c'h)) - \sigma g(\phi(t-c_0h))\leq $$
$$
g(\phi_\sigma(t-c_0h)) - \sigma g(\phi(t-c_0h))\leq 6A(\phi(t-c_0h))^{1+\gamma}. 
$$
On the other hand, from Proposition \ref{pr1p} we know that 
$$
 (c_0-c')\phi'_\sigma(t)=  (c_0-c')\zeta \sigma\phi(t) (1+ o(1)) = (c_0-c')\zeta \sigma e^{\zeta c_0 h}\phi(t-c_0h) (1+ o(1))
$$
for $\zeta \in \{\lambda_1(c_0), \lambda_2(c_0)\}$ and $t \to -\infty$. 
As a consequence, there exists $T_1\leq t_*$ (which does not depend on $\sigma$) such that, for all $\sigma \in (1,2]$,  
$$
E(t,\sigma) < 0,\  t \leq T_1. 
$$
Due to assumption {\rm \bf(U)}, the function 
$
G(u) : = g(u)/u 
$
has negative derivative on some interval  $\mathcal O= (\theta',+\infty) \supset [\theta,\kappa]$.  Thus  
$$
G(u)-G(v) = G'(w)(u-v) < 0,\quad  u > w> v \geq \theta'. 
$$
Observe  that $\theta'$  does not depend on $\sigma$. Since from the very beginning we can fix $c'$ sufficiently 
close to $c_0$ to have $\phi(T_2-c'h), \phi(T_2-c_0h)  \in (\theta',\theta)$,  for some $T_2$ (which depends only on $\phi,  c_0$), we obtain that  
for $t\geq T_2$ and every $\sigma >1$, it holds 
$$
 E(t,\sigma) = (c_0-c')\phi'_\sigma(t)+ g(\phi_\sigma(t-c'h)) - \sigma g(\phi(t-c_0h)) = 
$$
$$
 (c_0-c')\phi'_\sigma(t)+ \sigma\phi(t-c'h)(G(\phi_\sigma(t-c'h)) - G(\phi(t-c'h)))+  \sigma( g(\phi(t-c'h)) -  g(\phi(t-c_0h))) \leq 
$$
$$
 (c_0-c')\left(\phi'_\sigma(t)+  \sigma hg'(\kappa)\phi'(t-\hat ch) \right) =
$$
$$
 (c_0-c')\sigma\left(\phi'(t)+  hg'(\kappa)\phi'(t-\hat ch) \right) <0, \quad \mbox{for some} \ \hat c \in (c_0,c'). 
$$
Finally, since $\phi_\sigma(t-c'h), \phi_\sigma(t-c_0h)  \in (0,\theta)$, for all small $\sigma$ and $t \leq T_2$, we conclude that
$$
  E(t,\sigma)= (c_0-c')\phi'_\sigma(t)+ g(\phi_\sigma(t-c'h)) - \sigma g(\phi(t-c_0h))  <
 $$
$$
 (c_0-c')\phi'_\sigma(t)+ g(\phi_\sigma(t-c_0h)) - \sigma g(\phi(t-c_0h)) <0
$$
uniformly on $[T_1,T_2]$ for all small $\sigma -1>0$. 

All the above shows that $E(t,\sigma) <0$ for all $t \in \R$ and each $\sigma >1$ sufficiently close to $1$. 

\vspace{2mm}

\noindent  \underline{Step II (Construction of an upper solution).}  
By  Step I, we can choose $c', \sigma> 1$ in such a way that  $E(t,\sigma) <0$, $t \in \R$. 
For $a:=b^2,\  b \in (0,1]$, set $\phi_{b}(t): = \phi_\sigma(t) + ae^{\lambda'_2 t}+ be^{\lambda_2 t}$, where $\lambda'_2 =\lambda_2(c'),$ $ \ \lambda_2 =\lambda_2(c_0)$. Let $T_3= T_3(b)$ be that unique point where 
$\phi_b(T_3(b))=\kappa$, then $T_3(b) \leq T_3(0)$ for all $b\geq 0$. It is clear that $\phi'_b(T_3) >0$ and that $T_3(b) \to T_3(0)$ as $b \to 0$.  
 Next, we find  that 
$$
E_+(t,b):= \phi_b''(t) - c'\phi'_b(t) - \phi_b(t) + g(\phi_b(t-c'h)) =   E(t,\sigma)+ b\chi(\lambda_2,c')e^{\lambda_2 t} +
$$
$$
g( \phi_\sigma(t-c'h) + ae^{\lambda'_2 (t-c'h)}+be^{\lambda_2(t-c'h)})-   g(\phi_\sigma(t-c'h)) - g'(0)(ae^{\lambda'_2 (t-c'h)}+be^{\lambda_2(t-c'h)})
$$
$$
 \leq E(t,\sigma) +b\chi(\lambda_2,c')e^{\lambda_2 t}+$$
$$
A(ae^{\lambda'_2 (t-c'h)}+be^{\lambda_2(t-c'h)})( \phi_\sigma(t-c'h) + ae^{\lambda'_2 (t-c'h)}+
be^{\lambda_2(t-c'h)})^\gamma \leq  E(t,\sigma) +
$$
$$
be^{\lambda_2 t}\left(\chi(\lambda_2,c')+3Ae^{-\lambda_2 c'h}(be^{(\lambda'_2-\lambda_2) (t-c'h)}+1)
(\phi^\gamma_\sigma(t-c'h)+2b^\gamma e^{\lambda'_2\gamma (t-c'h)} )\right)
$$
$$
\leq E(t,\sigma) +be^{\lambda_2 t}\left(\chi(\lambda_2,c')+C_1 
\phi^\gamma_\sigma(t-c'h)+C_2b^\gamma e^{(\lambda'_2(1+\gamma)-\lambda_2) (t-c'h)} \right)$$
$$
+be^{\lambda_2 t}C_3b \phi^\gamma_\sigma(t-c'h)e^{(\lambda'_2-\lambda_2)t }\leq 
E(t,\sigma) + be^{\lambda_2 t}\left(\chi(\lambda_2,c')+C_4 e^{\nu_1 t}\right), \quad t \leq T_4,
$$
for some positive $\nu_1, \ C_j$ and negative $T_4$ (which does not depend on $b$).   Since $\chi(\lambda_2,c')<0$, we may choose  $T_4$ is  such a way that 
$E_+(t,b) <0$ for all $t \leq T_4,$ $ b \in (0,1]$.  On the other hand, we know that, uniformly on each compact interval,  $E_+(t,b) \to E(t,\sigma),  b \to 0+$.  Therefore $E_+(t,b) <0$ for all $t \leq T_3(0+)+1$ for all sufficiently small $b$. 

\noindent Consider now
$C^\infty$-smooth non-increasing function $\psi(t)$ such that $\psi(t) = 1$ for all $t \leq T_3(0)+1$ and 
$\psi(t) = 0$ for all $t \geq T_3(0)+1+c'h$. 
We define an upper solution $\phi_+$ by 
$$
\phi_+(t): =   \phi_\sigma(t) + (ae^{\lambda'_2 t}+ be^{\lambda_2 t})\psi(t). 
$$
Observe that, for all small $b$, the function $\phi_+(t)$ is increasing and 
$$
 \phi_+''(t) - c'\phi'_+(t) - \phi_+(t) + g(\phi_+(t-c'h)) =  
 \left\{\begin{array}{cc} 
 E(t,\sigma) <0,&  \mbox{for all} \ t \geq T_3(0)+1+c'h; \\    
E_+(t,b)<0, & t \leq T_3(0)+1.\end{array}\right. 
$$
Since uniformly on $[T_3(0)+1,T_3(0)+1+c'h]$, 
$$
\lim_{b\to 0+} ( \phi_+''(t) - c'\phi'_+(t) - \phi_+(t) + g(\phi_+(t-c'h)) ) =  E(t,\sigma) <0, 
$$
we conclude that, for all small $b>0$,  
$$
 \phi_+''(t) - c'\phi'_+(t) - \phi_+(t) + g(\phi_+(t-c'h)) <0, \quad t \in \R. 
$$

\noindent \underline{Step III (Construction of a lower solution).} 
Consider the  following concave monotone  linear rational  function 
$$p(x):= \frac{g'(0)x}{1 +Bx} \leq g'(0)x, \ x \geq 0,  \ B: =  \frac{g'(0)-1}{\theta}, \ p(0)=0, \ p(\theta)= \theta,$$
and set $g_-(x) : = \min \{g(x),p(x)\}$. It is clear that $g_-$ is continuous and increasing on $[0,\theta]$ and that 
$$g_-'(0)= g'(0), \ g_-(0)=0, \ g_-(\theta)= \theta, \quad g_-(x) \leq g'(0)x, \ x \geq 0.$$
Moreover, in some right neighborhood of $0$,   
\begin{equation*}
\left|g_-(u)/u- g'(0)\right| \leq A'u^\gamma, \quad u\in  (0,\delta'],
\end{equation*}
for some $A' >0,\  \delta' >0$.    
As we have mentioned in the introduction, this implies the existence of a monotone positive function $\phi_-, \ \phi_-(-\infty)=0,$ $\phi_-(+\infty)= \theta,$ satisfying the equation
$$
\phi_-''(t) - c'\phi_-'(t) - \phi_-(t) + g_-(\phi_-(t-c'h)) =0. 
$$
Due to the  property $g_-(x) \leq g'(0)x, \ x \geq 0$, we also know that 
$$
(\phi_-, \phi'_-)(t+t_0,c)= e^{\lambda'_2 t}(1, \lambda'_2) + O(e^{(\lambda'_2+ \sigma) t}), \ t \to -\infty. 
$$
Finally,  since $g_-(x) \leq g(x)$ we obtain that
$$
\phi_-''(t) - c'\phi_-'(t) - \phi_-(t) + g(\phi_-(t-c'h)) \geq 0. 
$$
\underline{Step IV (Iterations).} 
Comparing asymptotic representations of monotone functions  $\phi_-(t)$ and $\phi_+(t)$  at $+\infty$ and $-\infty$, we   find easily that 
$$
\phi_-(t+s_1) \leq  \phi_+(t), \quad t \in \R, 
$$ 
for some appropriate $s_1$. 
Simplifying, we will suppose that $s_1=0$.  

Using the fundamental solution $N(t), \ \int_\R N(s)ds= 1/(g'(\kappa)-1)$,  defined in Section \ref{fund}, we can rewrite the profile equation (\ref{prfe}) with $c=c'$ in the following equivalent forms 
\begin{equation*}
(D'\phi)(t) = g'(\kappa)\phi(t-c'h) -g(\phi(t-c'h)), \quad \phi(-\infty)=0, \ \phi(+\infty) = \kappa, 
\end{equation*}
$$
\hspace{-7mm}(\mbox{here we use the notation} \quad (D'y)(t):=y''(t) - c'y'(t) - y(t) + g'(\kappa)y(t-c'h)), 
$$
and $\phi(t) = (\mathcal{N}\phi)(t),  \  \phi(-\infty)=0, \ \phi(+\infty) = \kappa$, where 
$$
 (\mathcal{N}\phi)(t): =  \int_\R N(t-s) \left(g'(\kappa)\phi(s-c'h)-  g(\phi(s-c'h))\right)ds. 
$$
Since $N(t) < 0, \  t \in \R,$ and, by our assumption, $\min_{u\in[0,\kappa]}g'(u)= g'(\kappa) < 0,$ the integral operator $\mathcal{N}$ 
is increasing on $C(\R, [0,\kappa])$, i.e. 
$$
 0 \leq (\mathcal{N}\phi)(t)\leq  (\mathcal{N}\psi)(t) \leq \kappa, \quad \mbox{whenever} \ \phi(t) \leq \psi(t), \ t \in \R, \quad \phi, \psi \in C(\R, [0,\kappa]). 
$$
In addition, the properties of  functions $\phi_-(t)$ and $\phi_+(t)$ guarantee  that 
$$
\phi_-(t) \leq (\mathcal{N}\phi_-)(t) \leq (\mathcal{N}^2\phi_-)(t)\leq \dots \leq (\mathcal{N}^2\phi_+)(t) \leq  (\mathcal{N}\phi_+)(t) \leq    \phi_+(t), \quad t \in \R. 
$$
By standard arguments (e.g. see  \cite{TpT} for details),  the latter  implies  the existence of a monotone continuous function  $\phi(t)=\lim_{k\to +\infty} (\mathcal{N}^k\phi_+)(t)$ such that 
$$
(\mathcal{N}\phi)(t) = \phi(t),  \quad \phi_-(t) \leq  \phi(t) \leq  \phi_+(t), \quad t \in \R. 
$$
This amounts to the existence of a wavefront propagating at velocity $c'$. Moreover, the latter estimations show that, for some $s_0$ and positive $\delta$, 
\begin{equation}\label{ff}
\phi(t+s_0)= e^{\lambda'_2 t} + O(e^{(\lambda'_2+ \delta) t}), \ t \to -\infty.
\end{equation}
Finally, to prove that ${\mathcal{C}}(h)$ coincides with the interval $[c_*, \infty)$, let us consider the open set 
$O= [c_*,\infty)\setminus {\mathcal{C}}(h)$.  If $O \not=\emptyset$, we  take one connected component 
of $O$, say $(c_0,c_1)$.  Since $c_0 \in {\mathcal{C}}(h)$, there is some  $c'\in (c_0,c_1)$ such that 
$c' \in {\mathcal{C}}(h)$, in contradiction to the definition of $O$. Therefore ${\mathcal{C}}(h)=[c_*, \infty)$. 
\begin{remark} \label{reS} (I) It is easy to see that the result established in this section is slightly stronger that the assertion of  Theorem \ref{main1}. Indeed, we have proved that 
even for $h> h_*$ sufficiently close to $h_*$ there is a positive 
number $c_*=c_*(h)$ such that  equation (\ref{MGD}) (a)  possesses  a unique monotone wavefront $u(t,x)= \phi_c( x+ct)$ for every $c \in [c_*,c(h)]$, where $c(h)$ is given by (\ref{c(h)}); (b) does not have  any semi-wavefront propagating at the velocity $c < c_*$. It is instructive to note that the main conclusion of Theorem 1.3 in \cite{ai} is  of the same kind. 

(II)  Take some monotone wavefront $\phi(x+c_0t)$ and consider the expression 
$$
F(t,c):= \phi''(t) - c\phi'(t) - \phi(t) + g(\phi(t-ch)). 
$$
Then $F(t,c_0)\equiv 0$, $$F_c(t,c_0) = -(\phi'(t)+hg'(\phi(t-c_0h))\phi'(t-c_0h)),$$ so that if we want the inequality $
E(t,\sigma) <0$ to be satisfied for all small $c'-c_0 >0,$ $\sigma-1>0$, we need to assure the following property of a wavefront: $$
\phi'(t)+hg'(\phi(t-c_0h))\phi'(t-c_0h) >0, \quad t \in \R.
$$
Next, it is well known (e.g. see \cite[Lemma 4.3]{LMS}) that the monotonicity of $\phi(t)$ and the assumption $g'(x) \geq g'(\kappa)$, $x \in [0, \kappa]$, imply that 
$$
\lim_{t \to +\infty}g'(\phi(t-c_0h)) = g'(\kappa), \quad  \lim_{t \to +\infty}[\phi'(t-c_0h)/\phi'(t)] = e^{-\mu_2 c_0h}.  
$$
All this shows that condition (\ref{hka}) is nearly optimal for the construction of an upper solution for the perturbed profile equation (with $c' >c_0$ close to $c_0$) from 
$\phi(t)$. 
\end{remark}

\section{Proofs of Proposition \ref{pr1p} and Theorem \ref{pr1}}\label{poofs} 

\begin{proof}[Proof of Proposition \ref{pr1p}] Due to \cite[Lemma 6]{TTT}, we have that  $\phi'(s) >0$ for all $s$ from some infinite interval $(-\infty,\sigma).$  The exponential decay  of $\phi(t)$ at $ -\infty$ is assured by  \cite[Lemma 3 (ii)]{AGT}.   Therefore there is $\delta >0$ such that 
$$
g(\phi(t-ch))= \left[g'(0) + r(t)\right]\phi(t-ch),\ \text{where} \ r(t):= \frac{g(\phi(t-ch))}{\phi(t-ch)} -g'(0) = o(e^{\delta t}).
$$
On the other hand, it is easy to see that the convergence  $\phi(t) \to 0, t \to -\infty,$  is not super-exponential, cf.  \cite[Theorem 5.4 and Remark 5.5]{tt}.  Now we can proceed as in  \cite[Remark 5.5]{tt} (where \cite[Proposition 7.2]{MP} should be used) to obtain  asymptotic formulas (\ref{afe}), (\ref{afec}).  
\end{proof}
\begin{proof}[Proof of Theorem \ref{pr1}] It follows from Theorem \ref{main1} and Remark \ref{reS} (I) that we can associate 
a unique monotone traveling front $u=\phi_c(x+ct)$ satisfying the normalization condition $\phi_c(0)=\theta$
 to each pair $(h,c) \in {\mathcal D}_*$  such that  $c\geq c_*(h)$. The uniqueness of solutions $\phi_c(t)$ implies that $\phi_c(t)$ is a continuous function of $c,t$. Therefore for every  $c'>  c_*(h)$, $(h,c') \in {\mathcal D}_*$,  there exists some interval $[c_0,c']$ with $c_0> c_*(h)$ such that 
 $$
1+hg'(\kappa)e^{-\mu_2(c_0) \bar ch} >0, \quad \mbox{for all} \  \bar c \in [c_0, c'], $$
 as well as 
 $\phi_{c_0}(T_2-c'h), \phi_{c_0}(T_2-c_0h)  \in (\theta',\theta)$,  for some $T_2$.  Arguing now as in Section \ref{Sp1}, we find that 
 $\phi_{c'}(t) \geq \phi_-(t+t_*), \ t \in \R$, for some $t_*$. This yields the representation (\ref{ff}) and, consequently, the required formula (\ref{afep}). 
 
 Next, suppose that $(h,c_*) \in {\mathcal D}_*$ and $c_*=c_*(h)> c_\#(h)$. Proposition \ref{pr1p} shows that if (\ref{afecp}) does not hold for these parameters then the critical profile 
 $\phi_{c_*}(t)$ must satisfy the asymptotic relation (\ref{afep}). Assuming this relation to hold, we find from Theorem 1.7 in \cite{LMS}  that equation (\ref{MGD}) has a monotone wavefront $\phi_c(x+ct)$ for all $c$ from some open interval containing $c_*$ (note that the first part of  our proof together with the latter assumption imply  that the set $\mathcal{D}_{\frak N}$ defined in \cite{LMS} contains the interval 
 $\{h\} \times [c_*(h), c(h))$\ ). However, this contradicts the minimality property of $c_*(h)$. 
 \end{proof}

\section{A toy model: delay turns pushed waves into pulled waves}\label{Stoy}

In this section, aiming at understanding the structure of the set of all 
admissible speeds $\mathcal C(h)$ defined in the introduction and, partially, the dynamics of model (\ref{MGD}), we consider the following piece-wise linear  equation
\begin{eqnarray}\label{MGDt}\hspace{8mm}
u_t(t,x) - u_{xx}(t,x) + u(t,x)= \left\{\begin{array}{cc} 
ku(t-h,x),& u(t-h,x) \in [0,1), \\    
4 - u(t-h,x), & u(t-h,x) \geq 1,\end{array}\right. 
\end{eqnarray}
where  the slope $k$ of the birth function at zero satisfies $k \in (1, 3)$ and $\kappa= 2$ is the positive equilibrium (see Fig. \ref{GeomVyznam}, note that condition (\ref{subt})
holds if we choose $k \geq 3$). 
 This kind of equations,  sometimes nicknamed `toy models', is frequently used in the theory of traveling waves, cf. \cite{KH,NPTT,TVN}.
The advantage of (\ref{MGDt}) is that such a remarkable  and difficult to detect solution  of  the delayed equation such as  its positive traveling wave, can be  found explicitly by using the Laplace transform.  Now, since (\ref{MGDt}) has discontinuous right hand-side,  we define the positive  profile $\phi(t), \ \phi(-\infty)=0,$  $\phi(+\infty) = 2$,  of its wavefront $u(t,x)=\phi(x+ct)$  as $C^1-$smooth and piece-wise analytical solution of the delayed differential equation 
\begin{equation}
\phi''(t)-c\phi'(t) -\phi(t) + g(\phi(t-ch))=0, \ \mbox{where} \ g(u)=  \left\{\begin{array}{cc} 
ku,& u \in [0,1), \\    
4 - u, & u \geq 1. \end{array}\right. \label{MGDp}
\end{equation}
\begin{lemma} \label{le3} If $\phi(t)$ is a wavefront profile  for (\ref{MGDt}), then $\phi(t) < 3$ for all $t \in \R$. 
\end{lemma}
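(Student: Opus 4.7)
The plan is to convert (\ref{MGDp}) into a convolution equation with a strictly positive Green function, and then combine the positivity with the explicit upper bound $g(u) \leq 3$ to force the strict inequality.

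First, I would rewrite (\ref{MGDp}) as $L\phi = g(\phi(\cdot - ch))$, where $Ly := -y'' + cy' + y$. The characteristic equation $z^2 - cz - 1 = 0$ of $L$ has real roots $\lambda_1 > 0 > \lambda_2$ with $\lambda_1\lambda_2 = -1$, and a direct computation gives the bounded Green function
\[
G(t) = \frac{1}{\lambda_1 - \lambda_2}\begin{cases} e^{\lambda_1 t}, & t \leq 0, \\ e^{\lambda_2 t}, & t \geq 0, \end{cases}
\]
which is strictly positive on $\R$ and satisfies $\int_\R G(t)\,dt = 1$ (using $\lambda_1\lambda_2 = -1$). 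Since $\phi$ is continuous with finite limits at $\pm\infty$, it is bounded and so is $g(\phi(\cdot-ch))$; uniqueness of bounded solutions of $Ly = g(\phi(\cdot-ch))$ (two such solutions differ by $Ae^{\lambda_1 t} + Be^{\lambda_2 t}$, with $A = B = 0$ forced by boundedness) then yields
\[
\phi(t) = \int_\R G(t-s)\, g(\phi(s-ch))\,ds, \quad t \in \R.
\]

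Second, I would invoke the pointwise bound $g(u) \leq 3$ for all $u \geq 0$, with strict inequality except at $u = 1$: indeed $g(u) = ku \leq k < 3$ on $[0,1)$, while $g(u) = 4 - u < 3$ for $u > 1$. Because $\phi(-\infty) = 0$, continuity supplies a left half-line on which $\phi < 1$ and hence $g(\phi) \leq k < 3$; combined with $G > 0$ everywhere this gives the strict estimate
\[
\phi(t) = \int_\R G(t-s)\, g(\phi(s-ch))\,ds < 3 \int_\R G(t-s)\,ds = 3
\]
for every $t \in \R$, as required.

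The only subtle point is the legitimacy of the convolution representation in view of the jump of $g$ at $u = 1$. This is a minor issue: since $\phi$ is $C^1$ and piece-wise analytic, $g(\phi(\cdot-ch))$ is bounded and piece-wise continuous, so $L\phi = g(\phi(\cdot-ch))$ holds almost everywhere (and in the distributional sense), and the uniqueness-of-bounded-solutions argument applies verbatim.
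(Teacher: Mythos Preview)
Your proof is correct and follows essentially the same approach as the paper: you express $\phi$ as the convolution of $g(\phi(\cdot-ch))$ against the strictly positive, unit-mass Green kernel for $-y''+cy'+y$, then use $g\le 3$ together with strict inequality near $-\infty$ (the paper phrases this as $g(\phi(-\infty))=0$) to conclude $\phi<3$. The paper compresses the construction of the kernel into the phrase ``by standard arguments,'' whereas you spell it out; one minor caution is that your local use of $\lambda_1,\lambda_2$ for the roots of $z^2-cz-1=0$ collides with the paper's reserved notation for the zeros of $\chi(z,c)$.
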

\begin{proof} By standard arguments, we obtain the following integral representation 
$$
\phi(t)= \int_\R K_1(t-s)g(\phi(s-ch))ds, \quad \mbox{where} \ K_1(s) > 0, s \in \R, \int_\R K_1(s)ds=1.  
$$
Thus the lemma follows from the fact that $g(\phi(s-ch))\leq 3$ and $g(\phi(-\infty))=0$. 
\end{proof}
We  first consider $h=0$.  It is straightforward to see that the roots of the characteristic equation  at the positive equilibrium, $\chi_\kappa(z)= z^2 -c z - 1-e^{-zch} = 0$, $h=0$,  are given by the formula 
$$
\mu_{1,2}(c): = \frac{1}{2}\left(c \pm \sqrt{c^2+8}\right)
$$
and satisfy $\mu_2 < 0 < \mu_1$,
while the roots of the characteristic equation at the zero steady state,    $\chi(z,c)= z^2 -c z -1+ke^{-zch} = 0$, $h=0$, $c\geq c_\# = 2\sqrt{k-1}$,   are 
$$
\lambda_{1,2}(c): = \frac{1}{2}\left(c \pm \sqrt{c^2-4(k-1)}\right), 
$$
where  $0<\lambda_2 \leq  \lambda_1.$
Therefore,  if $c\geq c_\#(k) = 2\sqrt{k-1}$, then the point $(0,0)$ on the phase plane $(\phi,\phi')$ is an unstable node and the point $(2,0)$ is a saddle point. 
Clearly,  each wave profile $\phi(t)$ corresponds to a unique heteroclinic connection $(\phi,\phi')$ between these equilibria of (\ref{MGDp}) on the phase plane diagram. 
Since the stable manifold of the saddle point is given by the equation $\phi'=\mu_2(\phi-2)$, we obtain easily its value $-\mu_2$ at $\phi=1$, i.e.
$$
\phi'|_{\phi=1}=\frac{1}{2}(\sqrt{c^2+8}-c).
$$ 
Next, in the non-delayed case,  the graph $(\phi,\phi')$  of the pushed wave is given by $\phi'= \lambda_1\phi$ for $\phi \leq 1$, see \cite[Section 2.6]{ES}. 
Since, in addition,  the profile $\phi$ is $C^1$-continuous, we obtain the following compatibility condition at $\phi=1$: 
$$
\lambda_1=\frac{1}{2}\left(c + \sqrt{c^2-4(k-1)}\right) = \frac{1}{2}\left(\sqrt{c^2+8}-c\right).
$$
It is easy to see that this equation can be solved only when $k\leq 5/3$ yielding 
the following relation between the speed of the pushed wave $c_*(k)$ and the slope $k$: 
$$
c_*(k) = \frac{1+k}{\sqrt{2(3-k)}}, \quad k \in \left(1,\frac 53\right], 
$$
which is applicable  since  $c_*(k) >  c_\#(k)$ for $k \in (1,5/3)$. 

Hence, in general, the minimal  speed $c_*$ of propagation in the non-delayed model (\ref{MGDt}) is given by 
\begin{eqnarray}\label{111}
c_* =  \left\{\begin{array}{ccc} 
\frac{1+k}{\sqrt{2(3-k)}},& k \in (1,5/3], & \mbox{(pushed critical wave),} \\    
2\sqrt{k-1}, &  k > 5/3, & \mbox{(pulled critical wave)}.\end{array}\right. 
\end{eqnarray}
Now, if  $k \in (1,5/3)$, then the continuity argument suggests  that  equation (\ref{MGDt})  has pushed minimal wavefront for all small  $h>0$. Characteristic property (\ref{afecp})  of this wave suggests how  its explicit determination can be obtained. 
Indeed,  let $\phi(t)$ be the profile of the minimal front propagating with the speed $c=c_*>c_\#$. 
Obviously, 
there exists  the rightmost $t_0$ such that $\phi(t)\in (0,1]$ for all  $t \leq t_0-ch$, and $\phi(t_0-ch)=1$. Set for simplicity $t_0=0$. 
Then for all  $t \leq 0$, the wave profile $\phi$ is a positive solution of the linear equation
$$
\phi''(t)-c\phi'(t)-\phi(t) + k\phi(t-ch)=0.
$$
For $c >c_\#$,  the characteristic equation $\chi(z,c)=0$  has two positive real roots $0<\lambda_2 < \lambda_1$ which dominate each other (complex) root  in the sense that 
$\Re \lambda_j < \lambda_2$, e.g. see   \cite[Lemma 2.3]{tt}. Then the positivity of $\phi$ and its pushed character imply that 
\begin{equation*}\label{solutionMGDtp}
\phi(t)=e^{\lambda_1(t+ch)}, \quad t\leq 0.
\end{equation*}
Let us suppose now that $\phi(t) \geq 1$ for all $t \geq -ch$. This assumption is automatically satisfied if $\phi(t)$ is monotone on $\R$ and we will also prove  later in this section   that each wavefront (not necessarily pushed) to  (\ref{MGDt}) normalized by $\phi(-ch) = 1$ has to satisfy $\phi(t) > 1$ for all $t >-ch$. Then for  $t>0$ the profile  $\phi$ verifies    
\begin{equation}\label{MGDtp}
\phi''(t)-c\phi'(t)-\phi(t)+4 - \phi(t-ch)=0.
\end{equation}
The change of variables $\psi=\phi-2$ transforms the latter equation into
\begin{equation}\label{MGDtp2}
\psi''(t)-c\psi'(t)-\psi(t) - \psi(t-ch)=0.
\end{equation}
The $C^1$-continuity of $\phi$ also implies that 
$$
\psi(t) = e^{\lambda_1(t+ch)}-2,\quad t \in [-ch,0],
$$
$$
\psi(0)= e^{\lambda_1ch}-2, \quad \psi'(0) = \lambda_1 e^{\lambda_1ch}.
$$
Applying the Laplace transform $(L\psi)(z)= \int\limits_0^{\infty}e^{-zt}\psi(t)dt$ to  (\ref{MGDtp2}), we get
\begin{equation}\label{laplaceMGDtp2}
\chi_\kappa(z)(L\psi)(z)=\psi'(0)+z\psi(0)-c\psi(0)+e^{-zch}\int\limits_{-ch}^{0}e^{-zt}\psi(t)dt.
\end{equation}
By using the Rouch\'e theorem, it is easy to find  that $\chi_\kappa(z)$  has a unique positive zero $\mu_1$ while other characteristic values have negative real parts. Therefore $\lim \psi(t)=0$, $t \to +\infty$, if and only if $(L\psi)(\mu_1)=0.$ The last equation has the form
$$
\lambda_1 e^{\lambda_1ch} + (\mu_1-c)(e^{\lambda_1ch}-2)+ e^{-\mu_1 ch}\int\limits_{-ch}^{0}e^{-\mu_1 t}\psi(t)dt = 0, 
$$
which can be transformed (by using the relations  $\chi(\lambda_1,c) = \chi_\kappa(\mu_1)=0$)  into
\begin{equation}\label{theta}
\frac{\lambda_1(c)}{\mu_1(c)}=\frac{3-k}{4}.
\end{equation}
A simple calculation shows that formula (\ref{theta}) agrees with \eqref{111} with $h=0$ while our previous discussion suggests  that 
(\ref{theta}) must have at least one solution $c_*(h)$ close to $c_*(0)$ for all small $h>0$.  For further analysis of  (\ref{theta}) 
note that, being simple zeros of the characteristic quasi-polynomials, 
$\lambda_1(c)$ and $\mu_1(c)$ are analytical functions of $c$. In addition,  
$T(c):= \lambda_1(c)/{\mu_1(c)}$ is an increasing function of $c$. Indeed, 
$$
T(c):= \frac{\lambda_1(c)}{\mu_1(c)}=\frac{\tilde \lambda_1(c) - \epsilon_1(c)}{\tilde \lambda_1(c)+\epsilon_2(c)} = 
\frac{1-{\frac{2\epsilon_1(c)}{\sqrt{c^2+4}+c}}}{1+{\frac{2\epsilon_2(c)}{\sqrt{c^2+4}+c}}},
$$
where  $\tilde \lambda_1(c)=(\sqrt{c^2+4}+c)/2$ satisfies $z^2 -cz - 1=0$  and $\epsilon_1(c), \epsilon_2(c)$ are  associated complementary  functions.  We claim that
each $\epsilon_j(c)$ is  decreasing, $\epsilon_j(+\infty)=0$ and this  implies the  monotonic character of   $T(c)$. 
To prove the monotonicity of $\epsilon_1(c)$,   consider the interval $[\lambda_1(c_1),\tilde \lambda_1(c_1)]$ for positive  $c_1<c_2$, the parabola $y_1(z)=z^2 -c_1z - 1$ as well as  the shifted parabola $\bar y_2(z)= y_2(z+\alpha),$ $y_2(z)=z^2 -c_2z - 1$,  where  $\alpha >0$ is chosen to comply with $y_1(\lambda_1(c_1))=\bar y_2(\lambda_1(c_1))$. 
It is clear that the graph of $\bar y_2(z)$ is a shifted (horizontally  and downward) copy of the graph of $y_1(z)$. Hence,  $y_1'(z)<\bar y_2'(z)$ for  $z \in [\lambda_1(c_1),  \lambda_1(c_1)]$ 
and we can conclude that parabola $\bar y_2$ intersects the abscissa axis at some point from the interval $(\lambda_1(c_1),\tilde \lambda_1(c_1))$ so that its intersection with the  graph of $y=-ke^{-c_2h(z+\alpha)}$ belongs to the same interval. This means that
$\epsilon_1(c_1) = \tilde \lambda_1(c_1)-\lambda_1(c_1)>\tilde \lambda_1(c_2)-\lambda_1(c_2)=\epsilon_1(c_2)$. The fact that $\epsilon_2(c)$ decreases can be proved in a similar way. The property  $\epsilon_j(+\infty)=0$, $j=1,2$, is geometrically obvious.  

For a fixed $h \geq 0$, the function $T(c)$ is defined for all $c \geq c_\#(h)$ (note that $\lambda_1(c)$ is not defined for $c < c_\#(h)$). As we have seen, 
$T(+\infty) =1 > (3-k)/4$ so that the unique pushed wave to (\ref{MGDt}) exists if and only if  $T(c_\#(h)) \leq (3-k)/4$. Therefore, for each $k \in (1,5/3)$, the critical speed  $c_*(h)$ is well defined by  (\ref{theta}) and satisfies $c_*(h) > c_\#(h)$ on some  maximal interval $h \in [0,h_p),$ with $h_p \in (0, +\infty]$ depending on $k$. 

It is known that $c_\#(h)$ is a decreasing function of $h$ (see e.g. \cite[Lemma 1.2]{LMS}). The minimal speed $c_*(h)$ has the same property when the birth function  $g$ increases between $0$ and $\kappa$, cf. \cite[Lemma 3.5]{LZii}. The next result shows that  the monotonic nature of  $c_*(h)$ is also preserved in our unimodal toy model: 
\begin{lemma}The above defined function $c_*:[0,h_p] \to (0,+\infty)$ is continuous and strictly decreasing. 
\end{lemma}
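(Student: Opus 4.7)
The plan is to realize $c_*(h)$ as the implicit solution of the two-variable equation $T(c,h) = (3-k)/4$, where $T(c,h) := \lambda_1(c,h)/\mu_1(c,h)$, and then read off strict monotonicity and continuity from the implicit function theorem. First I would note that on the open region $\{(c,h): h\ge 0,\ c>c_\#(h)\}$ the characteristic roots $\lambda_1(c,h)$ and $\mu_1(c,h)$ are simple real zeros of the analytic quasi-polynomials $\chi(z,c,h)=z^2-cz-1+ke^{-zch}$ and $\chi_\kappa(z,c,h)=z^2-cz-1-e^{-zch}$, so by the analytic implicit function theorem they (and hence $T$) depend real-analytically on $(c,h)$ there.

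Next I would verify strict monotonicity of $T$ in each variable separately. Monotonicity in $c$ for fixed $h$ is exactly the argument already carried out in the paper via the decomposition $T = (\tilde\lambda_1-\varepsilon_1)/(\tilde\lambda_1+\varepsilon_2)$ with $\varepsilon_j$ strictly decreasing in $c$ — the geometric reasoning given there (comparing the shifted parabolas and the curve $-ke^{-ch(z+\alpha)}$) works verbatim for any fixed $h\ge 0$. For monotonicity in $h$ at fixed $c > c_\#(h)$, I would differentiate the two characteristic equations implicitly to obtain
\begin{equation*}
\partial_h\lambda_1 \;=\; \frac{kc\lambda_1 e^{-\lambda_1 ch}}{\chi'(\lambda_1,c,h)}\;>\;0,\qquad
\partial_h\mu_1 \;=\; -\,\frac{c\mu_1 e^{-\mu_1 ch}}{\chi_\kappa'(\mu_1,c,h)}\;<\;0,
\end{equation*}
where both denominators are strictly positive because $\lambda_1$ and $\mu_1$ are the \emph{largest} simple real roots of their respective characteristic functions (both of which tend to $+\infty$ as $z\to+\infty$). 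This gives $\partial_h T > 0$ strictly.

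With $\partial_c T>0$ throughout, the implicit function theorem applied to $T(c,h)=(3-k)/4$ produces a real-analytic function $c_*:(0,h_p)\to (0,+\infty)$ with
\begin{equation*}
c_*'(h)\;=\;-\,\frac{\partial_h T(c_*(h),h)}{\partial_c T(c_*(h),h)}\;<\;0,
\end{equation*}
which is the required strict decrease. Continuity at the interior points is automatic from analyticity; continuity at $h=0$ and at $h=h_p$ follows because strict monotonicity together with the obvious bounds $c_\#(h_p)\le c_*(h)\le c_*(0)$ forces the one-sided limits to exist, and joint continuity of $T$ (up to the boundary curve $c=c_\#(h)$, where $\lambda_1$ merely coalesces with $\lambda_2$ but $T$ remains continuous) identifies these limits with the prescribed boundary values.

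The main technical obstacle I anticipate is handling the endpoint $h=h_p$, where $\lambda_1=\lambda_2$ forces $\chi'(\lambda_1)=0$ and the implicit function theorem degenerates; I would sidestep this by using the monotonicity already proved to pass to the limit $c_*(h_p^-)$ and then invoking the maximality of $h_p$ to identify that limit with $c_\#(h_p)$, rather than trying to extend the IFT across the degeneracy. A secondary sanity check is that the derivative formulas above do not blow up inside $(0,h_p)$: this is precisely the statement that $c_*(h) > c_\#(h)$ there, which is the defining property of $h_p$.
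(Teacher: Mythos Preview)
Your proposal is correct and follows essentially the same route as the paper: both arguments hinge on the observation that $T(c,h)=\lambda_1/\mu_1$ is strictly increasing in each variable, and then read off that the level curve $T=(3-k)/4$ is strictly decreasing. Your explicit implicit-differentiation formulas for $\partial_h\lambda_1>0$ and $\partial_h\mu_1<0$ supply the detail the paper summarizes as ``it is easy to see''.

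The one methodological difference worth noting is that the paper bypasses the implicit function theorem entirely: once joint strict monotonicity of $T$ is in hand, it simply writes
\[
T(c_*(h_2),h_2)=\tfrac{3-k}{4}=T(c_*(h_1),h_1)<T(c_*(h_1),h_2)
\]
for $h_1<h_2$ and concludes $c_*(h_2)<c_*(h_1)$ from strict monotonicity in $c$. This elementary comparison never needs $\partial_c T\neq 0$ pointwise and never meets the degeneracy at $h=h_p$ that you (correctly) flag and then work around. So your version is more analytic and more carefully quantified, while the paper's is shorter and sidesteps the endpoint issue automatically; both are valid.
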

\begin{proof} Since the quotient $\lambda_1/\mu_1$ is a smooth function of $c,h$ and equation (\ref{theta}) has a unique solution for each $h \in [0,h_p]$, continuity of $c_*$ follows. 
Next, consider the roots $\lambda_1=\lambda_1(c,h)$ and $\mu_1=\mu_1(c,h)$, this time also indicating their dependence on $h$. It is easy to see that for  fixed speed $c$  and delays $h_1<h_2$,  $h_j\in [0,h_p]$, 
$$
\frac{\lambda_1(c,h_1)}{\mu_1(c,h_1)}<\frac{\lambda_1(c,h_2)}{\mu_1(c,h_2)},
$$
so   $\lambda_1/\mu_1$ is  an increasing function of delay as well as velocity. Therefore, since 
$$
\frac{\lambda_1(c_*(h_2),h_2)}{\mu_1(c_*(h_2),h_2)} = \frac{3-k}{4}  =\frac{\lambda_1(c_*(h_1),h_1)}{\mu_1(c_*(h_1),h_1)} < \frac{\lambda_1(c_*(h_1),h_2)}{\mu_1(c_*(h_1),h_2)} ,
$$
we conclude that $c_*(h_2) < c_*(h_1)$. 
\end{proof}
An important question concerns the possibility of the intersection of the graphs of $c_*(h)$ and $c_\#(h)$.  If this happens and   $c_*(0)> c_\#(0)$ but $c_*(h')=c_\#(h')$ for some 
finite $h'>0$, the delay  is transforming minimal pushed fronts into pulled fronts (here we are somewhat freely paraphrasing an expression coined  in \cite{EP}). This fact is important for the dynamics:  it is known  that pushed waves have better stability properties than the pulled minimal wavefronts. To show that this phenomenon is occurring in model (\ref{MGDt}), it is enough to find $k \in (1,5/3)$ and $h >0$ such that  
$$
T_1(h) = \frac{\lambda_1(c_\#(h),h)}{\mu_1(c_\#(h),h)} > \frac{3-k}{4},
$$
clearly, for such a particular $h$, equation  (\ref{theta}) does not have a solution. Here we can use the limit value $
T_1(+\infty) = {\lambda_{\infty}}/{\mu_{\infty}},
$ (cf. \cite[Lemma 1.2]{LMS}), 
where 
$$\lambda_\infty= \sqrt{1+\frac{1}{\rho^2}}- \frac{1}{\rho}, \quad \rho = \sqrt{w_+(2+w_+)}, $$ 
with $w_+$ being the unique positive root of the equation
$$
e^{-w}(2+w)=2/k, 
$$
and $\mu_\infty$ is the unique positive root for 
$$
\mu^2-1=e^{-\mu \rho}. 
$$

In a similar fashion, we can investigate  the intersection of the curves $c=c_*(h)$ and $c=c_\kappa(h)$.  If this happens and   $c_*(0)< c_\kappa(0)$ but $c_*(h')>c_\kappa(h')$ for some 
finite $h'>0$, the delay  is changing the   monotonicity  of pushed fronts. Particularly, the profile of the wavefront  propagating with the minimal speed $c_*(h')$ is slowly oscillating around the positive equilibrium, cf. \cite{TTT}. 
The same situation was  observed for akin bistable wavefronts \cite{ADG,TVN} to (\ref{MGD}). Arguing as in the previous paragraphs, we find that it suffices to indicate  $k \in (1,5/3)$ and $h' >0$ such that  
$$
T_2(h') = \frac{\lambda_1(c_\kappa(h'),h')}{\mu_1(c_\kappa(h'),h')} < \frac{3-k}{4}.
$$
The latter implies $c_\kappa(h') < c_*(h')$.   If the curves $c=c_\kappa(h)$ and $c=c_\#(h)$ do not intersect, we can use the limit value $
T_2(+\infty) = {\hat\lambda_{\infty}}/{\hat\mu_{\infty}},
$ (cf. \cite[Lemma 1.1]{LMS}), 
where 
$\hat\lambda_\infty >0$ and $\hat \mu_\infty <0$ can be found from the  equations
$$
\lambda^2-1+ke^{-\hat\rho\lambda}=0, \quad \mu^2-1=e^{-\mu \hat\rho} 
$$ 
resp., 
where $\hat \rho = \sqrt{w_-(2+w_-)}$
and  $w_-$ is  the unique negative root of the equation
$$
e^{-w}(2+w)=-2. 
$$
In the two examples given below, some numerical and geometrical evidences suggest that $T_1(h)$ is an increasing function, so that there can be at most one point of intersection of the graphs of $c_\#$ and $c_*$. 
\begin{figure}[h]
\vspace{-18mm} 
\begin{center} 
\scalebox{0.32}{\hspace{-25mm} \includegraphics{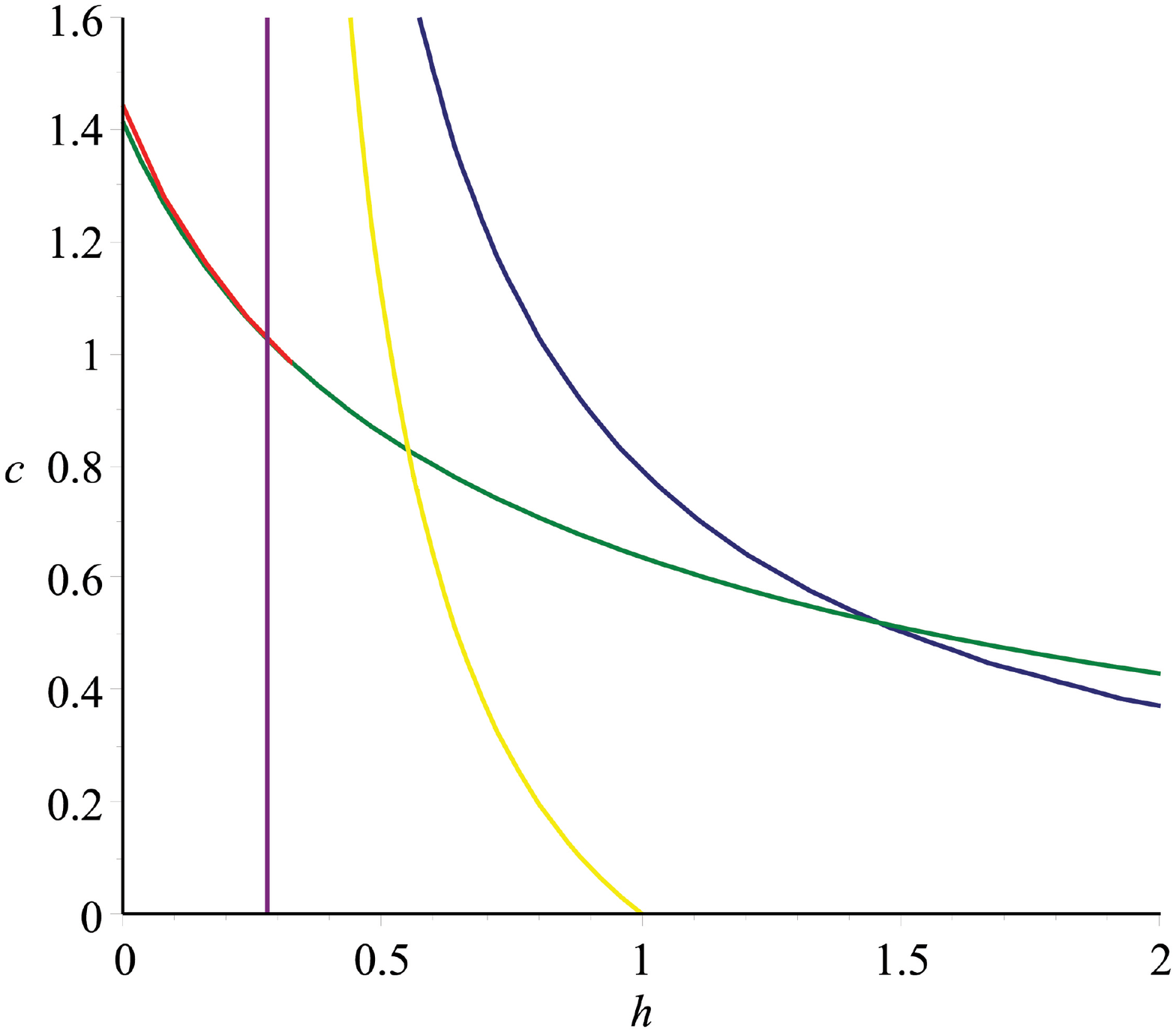} \includegraphics{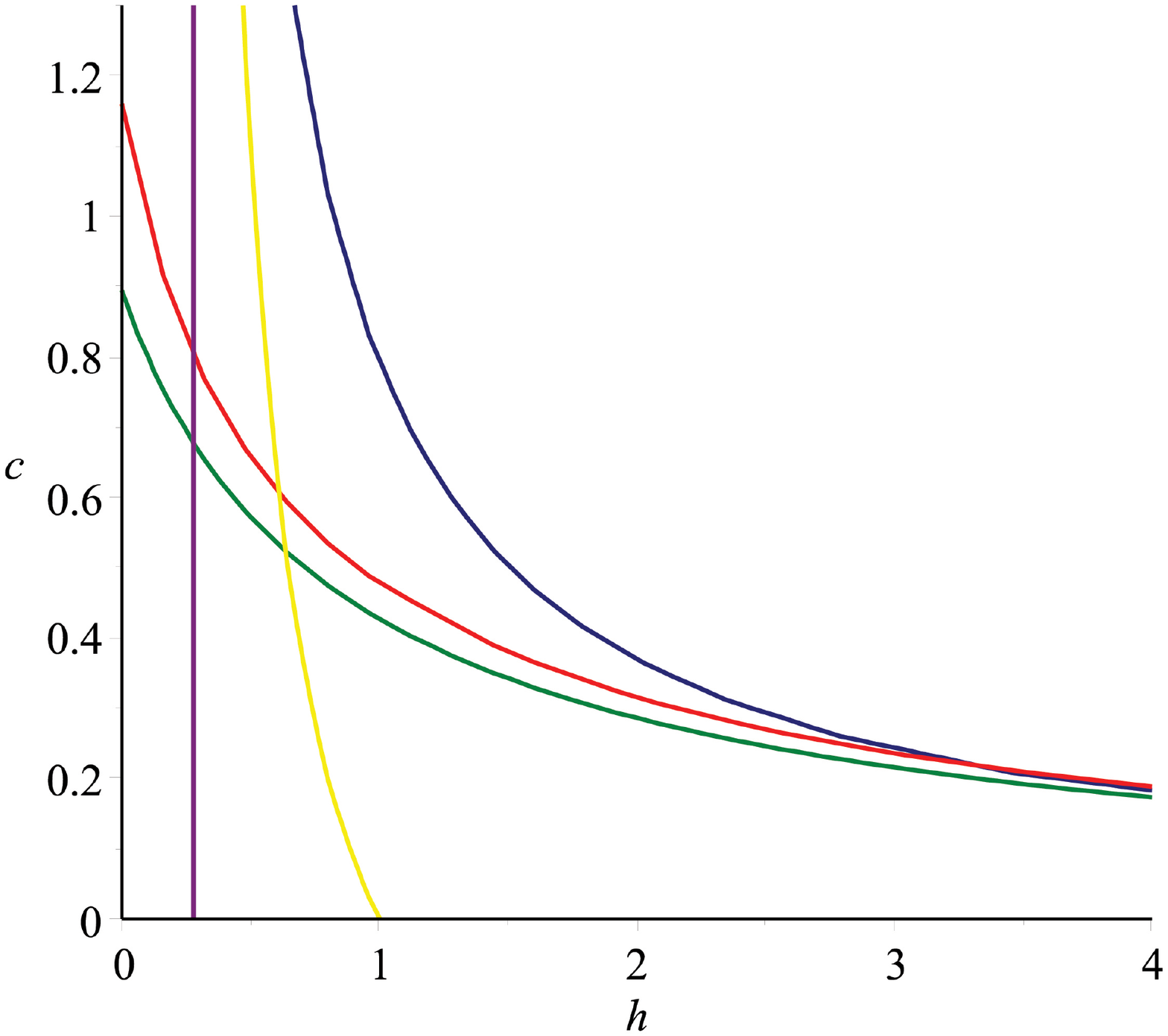}}

\vspace{-18mm}

\caption{Vertical asymptote $h=h_*$ followed (in the counter-clockwise direction)  by the graphs of $c(h), c_\#(h), c_*(h),c_\kappa(h)$. The cases $k=1.5$ (left) and $k=1.2$ (right).}
\label{Fig2}
\end{center}
\end{figure}
Take first  $k=1.5$.  Then $w_+= 0.7088\dots$, $\rho=1.3856\dots,\lambda_\infty=0.5115\dots,$ $\mu_\infty=1.1031\dots...$ and
$\lambda_\infty/\mu_\infty=   0.4637\dots >(3-1.5)/4= 0.375$. In this particular case, the minimal wavefront is pulled for all  $h>0.3379\dots$ and is pushed 
for all  $h<0.3379\dots$, see Figure \ref{Fig2} (left). Next, consider $k=1.2$. Then    $w_+= 0.3388\dots$,  $\rho=0.8901\dots,$ $\lambda_\infty=0.3806\dots,$ $\mu_\infty= 1.1639\dots$,   so that 
$\lambda_\infty/\mu_\infty=   0.3269\dots <(3-1.2)/4= 0.45$. In this particular case, the minimal wavefront is pushed for all  $h\geq 0$, see Figure \ref{Fig2} (right) and compare $T_1(4)=0.3141\dots$ with $T_1(+\infty)= 0.3269\dots$. If additionally  $h >h_{osc} = 3.25\dots$, after crossing the level $u=1$, this pushed wavefront  slowly oscillates \cite{TTT} around it. Here $h_{osc}$ is determined from the equation $T_2(h_{osc}) =(3-k)/4=0.45$, the graphs of $c_\kappa$ and $c_\#$ have an intersection.    Hence, contrarily to the first case (when $k=1.5$), delay is not changing  the pushed nature of minimal wavefronts when we have relatively `strong' non-subtangency of $g$ at $0$ ($g$ is taken with $k=1.2$). 

The following result completes the above discussion by showing that  for every $h \geq 0$ the set $\mathcal C(h)$ of all possible velocities of semi-wavefronts for  model (\ref{MGDt}) has the usual structure of  semi-infinite  interval $[c',+\infty)$. 
\begin{theorem} \label{T53}
Suppose that $k\in (1,3)$ is such that  (\ref{theta}) has solution $c_*\geq  c_\#$.
Then $c_*$ is the minimal speed of propagation in the sense that equation (\ref{MGDt}) has a wavefront solution 
propagating with the speed $c$  if and only if $c \geq c_*$.  Furthermore, $c_\#$ is the minimal speed of propagation if  (\ref{theta}) does not have a  positive root $c$.
\end{theorem}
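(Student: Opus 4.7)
The plan is to characterize wavefronts of (\ref{MGDt}) explicitly by extending the Laplace transform computation that produced (\ref{theta}) for pushed waves to a full two-mode ansatz on the left half-line. The proof reduces existence of a wavefront at speed $c$ to the solvability of a linear algebraic system in two coefficients $A,B$, then concludes by a sign analysis of the resulting expression.

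The first, and most delicate, step is structural: any wavefront $\phi$ (which by Lemma \ref{le3} satisfies $\phi<3$) should be translated so that $\phi(-ch)=1$ and then shown to satisfy $\phi(s)>1$ for every $s>-ch$, so that the profile sees only the single regime change at $t=0$. This would proceed by contradiction using the integral representation $\phi=K_1\ast g(\phi(\cdot-ch))$ from Lemma \ref{le3}: a secondary crossing $\phi(s_1)=1$ with $s_1>-ch$ would, combined with the explicit two-branch form of $g$ and the boundary values $0$ and $2$, create a sign pattern incompatible with the positivity of $K_1$. This step is delicate because the discontinuity of $g$ at $u=1$ precludes direct use of smooth maximum-principle arguments.

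Under this normalization, on $(-\infty,0]$ the profile solves $\phi''-c\phi'-\phi+k\phi(\cdot-ch)=0$, whose characteristic function $\chi(z,c)$ has (for $c>c_{\#}$) exactly two positive real roots $0<\lambda_2<\lambda_1$ dominating all other roots, merging at $c=c_{\#}$. Positivity of $\phi$ and its non-super-exponential decay at $-\infty$ (as in the proof of Proposition \ref{pr1p}) force the ansatz $\phi(t)=Ae^{\lambda_1 t}+Be^{\lambda_2 t}$ on $(-\infty,0]$, degenerating to $(A+Bt)e^{\lambda_1 t}$ at $c=c_{\#}$. On $[0,+\infty)$, $\psi=\phi-2$ solves the shifted linear equation, and since $\chi_\kappa$ has a unique positive root $\mu_1$, identity (\ref{laplaceMGDtp2}) shows $\psi(+\infty)=0$ is equivalent to $(L\psi)(\mu_1)=0$. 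A direct computation, simplifying via $\chi(\lambda_j,c)=\chi_\kappa(\mu_1)=0$ exactly as in the derivation of (\ref{theta}), reduces this scalar identity, linear in $(A,B)$, to
\begin{equation*}
A\,\alpha_1(c,h)\!\left(\frac{3-k}{4}-\frac{\lambda_1(c)}{\mu_1(c)}\right)+B\,\alpha_2(c,h)\!\left(\frac{3-k}{4}-\frac{\lambda_2(c)}{\mu_1(c)}\right)=0,
\end{equation*}
with explicit positive factors $\alpha_1,\alpha_2$. Combined with the normalization $Ae^{-\lambda_1 ch}+Be^{-\lambda_2 ch}=1$, this determines a unique candidate pair $(A,B)$ at each $c\geq c_{\#}$; a wavefront exists iff this pair yields an admissible profile (positive on $\R$, $\geq 1$ on $[-ch,+\infty)$).

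The proof concludes by a sign analysis using the monotonicity of $T(c)=\lambda_1(c)/\mu_1(c)$ (increasing to $1$, established in the section) and of $S(c)=\lambda_2(c)/\mu_1(c)$ (decreasing from $T(c_\#)$ to $0$, shown by a parallel geometric argument to the one given for $T$). In the case where (\ref{theta}) has a root $c_*\geq c_{\#}$, the sign pattern of the two bracketed quantities forces $A,B\geq 0$ precisely for $c\geq c_*$ (with $B=0$ at $c=c_*$, recovering the pushed critical wave), while both brackets are negative for $c\in[c_\#,c_*)$, obstructing admissible solutions. In the case where (\ref{theta}) has no positive root, $T(c_\#)>(3-k)/4$; a somewhat more involved positivity check, combining the degenerate form at $c_\#$ with the structural conclusion of Step 1 and allowing $A<0$, $B>0$ with $A+B>0$, yields admissible profiles for all $c\geq c_{\#}$. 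Finally, for $c<c_{\#}$, $\chi(z,c)=0$ has no positive real roots, ruling out positive decaying-at-$-\infty$ solutions of the linearization and hence any wavefront. The main obstacles are the structural Step 1 and the positivity verification in the second case.
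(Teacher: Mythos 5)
Your algebraic core is the same as the paper's: the two–mode ansatz (\ref{ip}) on $(-\infty,0]$, the condition $(L\psi)(\mu_1)=0$ obtained from (\ref{laplaceMGDtp2}), and a scalar relation in the two coefficients which, after rearrangement, is exactly the paper's formula (\ref{ppp}), combined with the monotonicity of $T(c)=\lambda_1(c)/\mu_1(c)$. The genuine gap is your Step 1. Saying that a second crossing of the level $1$ "would create a sign pattern incompatible with the positivity of $K_1$" is not an argument: a positive kernel is perfectly compatible with a profile oscillating about $1$, and positivity of $K_1$ carries no information about the level $1$ at all. What is actually needed is the quantitative bound $\phi<3$ of Lemma \ref{le3} together with a configuration in which the touching of the level $1$ is \emph{tangential}; the paper manufactures this by a continuation argument: it joins the given parameter pair to one where the constructed profile is monotone by a path inside $\mathcal{D}_\star$, takes the first parameter value $s_1$ at which the family $\phi(t,s)$ touches the level $1$ at some $t_1$ (so $\phi'(t_1,s_1)=0$, $\phi''(t_1,s_1)\ge 0$, and $t_1>0$ because (\ref{ip}) allows at most one critical point on $(-\infty,0]$), and then (\ref{MGDtp}) with $1\le\phi(t_1-c h,s_1)<3$ gives $0>-1+4-3=0$. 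You also never verify that the candidate stays \emph{below} $1$ on $(-\infty,-ch)$, which is needed for the branch $g(u)=ku$ to be the one actually seen there; the paper settles this by computing $(1+k)\phi'(-ch,c)>0$ explicitly and deducing $\phi'>0$ on $(-\infty,-ch]$.

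Your sign analysis on $[c_\#,c_*)$ is also wrong as stated. Since $T$ increases and $T(c_*)=(3-k)/4$, for $c<c_*$ both quantities $\frac{3-k}{4}-\frac{\lambda_1}{\mu_1}$ and $\frac{3-k}{4}-\frac{\lambda_2}{\mu_1}$ are \emph{positive} (the second exceeds the first because $\lambda_2<\lambda_1$), not negative. Moreover, the signs of the brackets do not by themselves obstruct anything: your linear system (determining relation plus normalization) has a unique solution $(A,B)$ for each $c$, and the true obstruction — precisely the content of (\ref{ppp}) — is that for $c<c_*$ this unique solution has a negative coefficient on the dominant slow mode $e^{\lambda_2 t}$, so the profile would be negative near $-\infty$. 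Note that a negative coefficient on the fast mode would \emph{not} contradict positivity: with $A<0<B$ and $A+B=1$ one still has $\phi(t)\ge e^{\lambda_2(t+ch)}>0$ on $(-\infty,-ch]$; so the argument must single out the slow mode, and your claim that admissibility "forces $A,B\ge0$ precisely for $c\ge c_*$" conflates the two coefficients. In the no-root case the same formula gives the slow-mode coefficient positive for every $c>c_\#$, so no extra sign bookkeeping with $A<0$, $B>0$ is needed there (except for the degenerate ansatz at $c=c_\#$ itself). With these two repairs — the continuation/tangential-touching argument for Step 1 and the corrected positivity criterion — your proof coincides with the paper's.
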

\begin{proof} Clearly, 
each non-critical (i.e. $c\not=c_\#$)  wavefront profile normalized by the condition $\phi(-ch)=1$ should be of the form  
\begin{equation}\label{ip}
\phi(t)=e^{\lambda_2(t+ch)} p + (1-p) e^{\lambda_1(t+ch)}, \quad t\leq 0,
\end{equation}
with some appropriate $p\geq 0$. Since $\lambda_2 < \lambda_1$, we have that $\phi(t)>0, \ t \leq -ch$. 
Similarly to the case of pushed wavefronts ($p=0$), relation (\ref{laplaceMGDtp2}) yields the following determining equation for the admissible speeds $c >c_\#$: 
$$
\lambda_2 e^{\lambda_2ch}p+ (1-p)\lambda_1 e^{\lambda_1ch}+ (\mu_1-c)(e^{\lambda_2ch} p + (1-p) e^{\lambda_1ch}-2)+ $$
$$e^{-\mu_1 ch}\int\limits_{-ch}^{0}e^{-\mu_1 t}\left(e^{\lambda_2(t+ch)} p + (1-p) e^{\lambda_1(t+ch)}-2\right)dt = 0. 
$$
After a straightforward computation, we  find the following relation between $p, c, k$: 
$$
\frac{1-p}{1-\lambda_1/\mu_1} + \frac{p}{1-\lambda_2/\mu_1}  = \frac{4}{1+k}, 
$$
from which 
\begin{equation}\label{ppp}
p=  \frac{4\lambda_1/\mu_1- (3-k)}{(1+k)}\left(\frac{\mu_1-\lambda_2}{\lambda_1-\lambda_2}\right) \leq \frac{\mu_1-\lambda_2}{\lambda_1-\lambda_2}, 
\end{equation}
where $\lambda_2 <\lambda_1 <\mu_1$ for $c > c_\#$. Formula (\ref{ppp}) shows the uniqueness of the profile $\phi$ normalized by $\phi(-ch) =1$ (equivalently,  the uniqueness of 
$p$) for each fixed admissible $c$. We will write $\phi(t,c)$ to indicate the dependence of $\phi$ on $c$. Since $p$ is a continuous function of $c$, we conclude that $\phi(t,c)$ depends continuously on $t,c$. 

Using (\ref{ppp}), in accordance with \cite{TTT}, we obtain that $\phi'(t,c) >0$ for all $t \leq -ch$. To prove
the latter fact, it suffices to establish the positivity of $\phi'(-ch,c)$: 
$$
(1+k)\phi'(-ch,c) = (3-k)(\mu_1-\lambda_1-\lambda_2) + \frac{4\lambda_1\lambda_2}{\mu_1}> 
$$
$$
\frac{3-k}{\mu_1}(\mu_1^2-(\lambda_1+\lambda_2)\mu_1 + 2\lambda_1\lambda_2)>
 \frac{3-k}{\mu_1}(\lambda_1^2-(\lambda_1+\lambda_2)\lambda_1 + 2\lambda_1\lambda_2)>0. 
$$

Next, it follows from (\ref{ppp}) that   $p \geq 0$ if and only if 
$$
\frac{\lambda_1(c)}{\mu_1(c)}\geq \frac{3-k}{4}. 
$$
Hence,  for each $c \geq c_\#$ satisfying the latter inequality, the initial part $\phi(t,c)$ given by (\ref{ip}) can be continued for $t > ch$ as a solution of 
(\ref{MGDtp}), with $\phi(+\infty,c)=2$. Here we are assuming that $\phi(t,c) > 1$ for all $t > -ch$, this assumption is automatically satisfied when $\phi(t)$ is monotone increasing on $\R$ (i.e. when  $(h,c) \in {\mathcal D}_\kappa$). In the general case, connect two points 
$$(0,c'),  (\tilde h,\tilde c) \in {\mathcal D}_\star = \{(h,c): c\geq c_\#(h)\ \mbox{and} \  \lambda_1(c)/\mu_1(c)\geq (3-k)/4\}. 
$$  with  some continuous path $(h(s),c(s)), \ s \in [0,1]$, lying in  ${\mathcal D}_\star$.  Let $\phi(t,s):= \phi(t,c(s))$ be the corresponding family of profiles, it depends continuously on $t,s$.  
Suppose now that $\phi(t',1) \leq 1$ for some $t' > -ch$. Since $\phi'(t,0)>0, \ t \in \R$, and $\phi(+\infty,s) =2$, there exist the smallest value of $s_1$ and some $t_1 > -ch$ 
such that $\phi(t_1,s_1)=1$ and $\phi'(t_1,s_1)=0,\  \phi''(t_1,s_1)\geq 0$.   This actually implies that $t_1>0$ since otherwise $\phi(t,s_1)$ has two critical points on 
$(-\infty,0]$ which is not possible in view of (\ref{ip}).  But then  (\ref{MGDtp}) and Lemma \ref{le3} lead to the following contradiction: 
$$
0=\phi''(t_1,s_1)-c(s_1)\phi'(t_1,s_1)-\phi(t_1,s_1) +4 - \phi(t_1-c(s_1)h(s_1),s_1) > -1 +4 - 3=0. 
$$
Hence, $\phi(t,s) > 1$ for all $t > -ch$, $s \in [0,1]$ that legitimizes the construction of  a wavefront to (\ref{MGDp}) by $C^1-$continuously gluing its initial part (\ref{ip}) with the unique  piece of the corresponding solution for equation (\ref{MGDtp}). 
This finalizes the proof of Theorem \ref{T53}.   
\end{proof}
On Figure 3, we present solution $u(t,x)$ of the Cauchy problem  
\begin{equation}\label{CP}
u_0(t,x)= \left\{ \begin{array}{ll}
0, & \mbox{as} \,\,\, x<0, \enspace t \in [-h,0], \\
2, & \mbox{as} \,\,\,  x \geq 0, \enspace t \in [-h,0],
\end{array}
\right.
\end{equation}
to equation  (\ref{MGDt}) at the indicated sequence $t=t_j$ of moments.  The numerical simulations are based on the Crank-Nicholson method which is second-order accurate in both spatial and temporal directions. The spatial step size is chosen as $\Delta x=0.05$ in the computational interval $x \in [-25,25]$ together with the Dirichlet boundary conditions $u(t,-25)=0$ and $u(t,25)=2$. The temporal step size is $\Delta t=0.01$.

It is known from \cite{WNH} that in the monotone case (i.e. when the birth function $g$  is increasing on $[0,\kappa]$), solution $u(t,x)$ of (\ref{CP}) exponentially rapidly converges 
to the pushed wavefront.  In sight of the above developed theory  of equation  (\ref{MGDt}), it is natural to expect that  its solution $u(t,x)$  also will converge to the pushed wavefront. Comparison of our theoretical and numerical results corroborates this fact.  For example, on Figure 3 (left) we present snapshots of solution $u(t,x)$ already stabilized around the pushed wavefront 
 for model (\ref{MGDt}) considered with $h=0.5$ and $k=1.2$.  By our theory, in this case the profile of pushed wave is monotone. The right part of Figure 3 shows a magnified fragment of the  leading edge of pushed wave for parameters $h=6$ and $k=1.2$. Our theory predicts that  the profile of  wavefront must be non-monotone in this case.  In good accordance with the theory,  in this case, the numerics presents a profile of pushed wave oscillating around the positive equilibrium (the amplitude of oscillations is relatively small). 

\begin{figure}[h]
\vspace{-52mm} 
\begin{center} \hspace{-10mm}
\scalebox{0.6}{\includegraphics{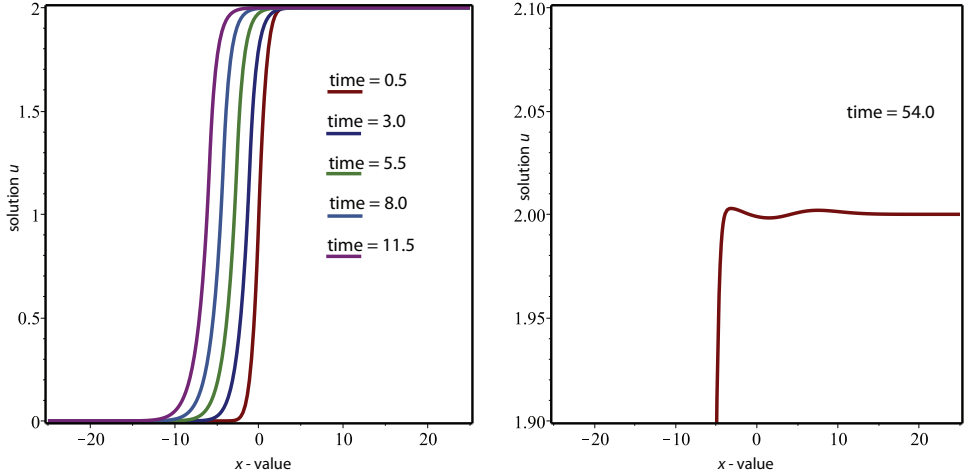}}
\vspace{-53mm} 
\caption{Snapshots of solution $u(t,x)$ to the Cauchy problem (\ref{CP}), $k=1.2$, converging to the pushed wavefront, at the indicated sequence of times $t=t_j$. The cases $h=0.5$ (left), $h=6$ (right).}
\label{Fig3}
\end{center}
\end{figure}

\vspace{-2mm} 

Furthermore, we have compared the theoretical values $c_\#(h), c_*(h)$ and the speeds  $c_{ns}(h)$ obtained from the numerical simulations  for different  values of delay $h$:

\vspace{3mm}

\mbox{\hspace{-5mm}  
\hspace{14mm} 
\begin{tabular}{|cccc||cccc|}
\hline 
 $h$  & $c_\#(h)$ & $c_*(h)$ & $c_{ns}(h)$ &$h$  & $c_\#(h)$ & $c_*(h)$ & $c_{ns}(h)$ \\
 \hline 
 0.5 & 0,5720 & 0,6562 & 0,6377 & 3.5 & 0,1922 & 0,2091 & 0,2112\\
 1 & 0,4270 & 0,4770 & 0,4662 &  4 & 0,1733 & 0,1883 & 0,1892\\
 1.5 & 0,3420 & 0,3779 & 0,3746 & 4.5 & 0,1579 & 0,1713 & 0,1727\\
 2 & 0,2860 & 0,3138 & 0,3165 & 5 & 0,1450 & 0,1571 & 0,1572\\
 2.5 & 0,2458 & 0,2687 & 0,2688 &  5.5 & 0,1340 & 0,1452 & 0,1461\\
 3 & 0,2157 & 0,2351 & 0,2353 & 6 & 0,1246 & 0,1348 & 0,1346\\
 \hline
 \end{tabular}}
 
\vspace{5mm} 

In the table, we can observe a good agreement between theoretical and numerical  values of the minimal speed for monostable wavefronts. Clearly, the minimal wave has  pushed character. In any event, as seen from a point of  view of  rigorous analytical proofs, such a convergence of solution (\ref{CP})   to a pushed wavefront remains  a difficult open problem.

\section{Appendix}
As it was established in  \cite[Lemma A.2]{TVN} and  \cite[Lemma 1.1]{LMS}, for each pair $(h,c) \in {\mathcal D}_\kappa$,  the characteristic function  $\chi_\kappa$ has exactly three real zeros, one positive and two negative (counting multiplicity), $\mu_3 \leq \mu_2 <0 < \mu_1$.  In addition, every complex zero $\mu_j$ of $\chi_\kappa$ is simple \cite[Lemma A.2]{TVN} and has its real part $\Re \mu_j < \mu_2$  \cite[Lemma 1.1]{LMS}.  We claim that actually $\Re \mu_j < \mu_3$ for each complex zero $\mu_j$ of $\chi_\kappa$. Indeed, fix $c=\bar c$ and consider the unique value $h_\star>0$ such that $(h_\star, \bar c)$ belongs to the boundary of $ {\mathcal D}_\kappa$ (i.e. $c_\kappa(h_\star)=\bar c$, cf. Section \ref{fund}).  Our claim is trivially valid for the parameters $(h,c)= (h_\star, \bar c)$ because of  $\mu_2=\mu_3$.  Moreover, since each half-plane $\Re z \geq \alpha$ 
contains at most finite number of zeros of $\chi_\kappa$ and $\mu_j(h)\not \in \R$, being a simple complex zero, depends continuously on $h>0$,  the above claim is also 
valid for all $h$ from some maximal  interval $(h_\delta, h_\star]$. If $h_\delta >0$ then $\Re\mu_k(h_\delta) =\mu_3$ for some index $k$. In this way,  there are at least 
three zeros of $\chi_\kappa$ having the same real part. However, by \cite[Lemma A.2]{TVN}, this situation cannot occur. 

\section*{Acknowledgments} \noindent 
The work of Karel Has\'ik, Jana Kopfov\'a and Petra N\'ab\v{e}lkov\'a was supported  by the institutional support
for the development of research organizations I\v CO 47813059.
  S. Trofimchuk  was   supported by FONDECYT (Chile),   project 1190712. 

\bibliographystyle{amsplain}

\end{document}